\newtheorem{theorem}{Theorem}
\newtheorem{lemma}[theorem]{Lemma}
\newcommand{\R}{\mathbb{R}}
\newcommand{\N}{\mathbbm{N}}
\renewcommand{\H}{\mathcal{H}}
\numberwithin{equation}{section}
\begin{document}

\title{Quantitative bounds for products of simplices in subsets of the unit cube}

\author[P. Durcik]{Polona Durcik}
\address{Schmid College of Science and Technology, Chapman University, One University Drive, Orange, CA 92866, USA}
\email{durcik@chapman.edu}

\author[M. Stip\v{c}i\'{c}]{Mario Stip\v{c}i\'{c}}
\address{Schmid College of Science and Technology, Chapman University, One University Drive, Orange, CA 92866, USA}
\email{stipcic@chapman.edu}

\subjclass[]{
Primary
05D10;
Secondary
42B20,
28A75
}

\keywords{Euclidean Ramsey theory, point configuration, singular integral.}

\begin{abstract}
For each $1\leq i \le n$,  let $k_i\geq 1$ and let $\Delta_i$ be a set of vertices of a non-degenerate simplex of $k_i+1$ points in $\R^{k_i+1}$. If $A\subseteq [0,1]^{k_1+1}\times \cdots \times [0,1]^{k_n+1}$   is a Lebesgue measurable set of measure at least $\delta$, we show that there exists an interval $I=I(\Delta_1,\ldots, \Delta_n,A)$ of length at least $\exp(-\delta^{-C(\Delta_1,\ldots, \Delta_n)})$ such that for each $\lambda\in I$,   the set $A$ contains   $\Delta'_1\times \cdots \times \Delta'_n$, where each $\Delta_i'$ is an isometric copy of  $\lambda\Delta_i$.  This is a quantitative improvement of a result by Lyall and Magyar. Our proof relies on harmonic analysis. 
The main ingredient in the proof are cancellation estimates for forms similar to  multilinear singular integrals associated with $n$-partite $n$-regular hypergraphs.  
\end{abstract}
\maketitle


\section{Introduction}
Finite point configurations  in     large  but otherwise arbitrary  subsets  of  Euclidean spaces have been a topic of several recent investigations  \cite{Bou86, CMP15,  DK20, DK18, FKW, Kov20, LM20, LM18, LM19}.  
In this paper we study existence of isometric copies of   dilates    of products of non-degenerate simplices in sets of positive Lebesgue measure in the unit cube. 
The main focus   is on a  quantitative lower bound on the size of the family of dilates whose isometric copies are detected in the set, as stated in  the following  theorem.  
\begin{theorem}\label{mainthm}
Let $n\ge 1$. For each $1\le i \le n$, let  $k_i\ge 1$  and let    $$\Delta_i=\{0,v_i^1,\ldots, v_i^{k_i}\},$$ where $v_i^1,\ldots, v_i^{k_i}\in \R^{k_i}$ are linearly independent vectors. Let $\delta\in (0,1/2]$.  There exists a constant $C=C(\Delta_1,\ldots, \Delta_n)>0$  such that the following holds. For every measurable set   $A\subseteq [0,1]^{k_1+1}\times \cdots \times [0,1]^{k_n+1}$ with the Lebesgue measure $|A| \ge  \delta$, there is an interval $I=I(\Delta_1,\ldots, \Delta_n,A)\subseteq (0,\infty)$ of length at least 
  \[(\exp(\delta^{-C(\Delta_1,\ldots, \Delta_n)}) )^{-1} \] such that for every $\lambda\in I$ and $1\leq i \leq n$ one can find  $x_i,y_i^1,\ldots,y_i^{k_i}\in \R^{k_i+1}$ satisfying
  \[y_i^l\cdot y_i^m = \lambda v_i^l\cdot \lambda v_i^m\quad \textup{for} \quad 1\leq l,m \leq k_i\]
  and
  $\Delta'_1\times \cdots\times \Delta'_n \subseteq A$,  
  where 
  $\Delta'_i = \{x_i, x_i+y_i^1,\ldots, x_i+y_i^{k_i}\}$.
  That is, 
    $\Delta'_i$ is an isometric copy of $\lambda \Delta_i$. 
\end{theorem}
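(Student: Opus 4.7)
The plan is harmonic-analytic and follows the structured-plus-oscillatory paradigm for detecting point configurations. For $\lambda>0$, introduce the nonnegative multilinear counting form
\[
\mathcal N_\lambda(f) := \int_{\prod_{i=1}^n \R^{k_i+1}} \int_{\prod_{i=1}^n O(k_i+1)} \prod_{(\omega_1,\ldots,\omega_n)\in \Delta_1\times\cdots\times\Delta_n} f\bigl(x_1+\lambda U_1\omega_1,\ldots, x_n+\lambda U_n\omega_n\bigr)\, dU\, dx,
\]
obtained by averaging over centers $x_i\in\R^{k_i+1}$ and Haar rotations $U_i\in O(k_i+1)$ of each embedded simplex $\Delta_i$. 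Any $\lambda$ with $\mathcal N_\lambda(\mathbbm{1}_A)>0$ produces an isometric copy of $\lambda\Delta_1\times\cdots\times\lambda\Delta_n$ inside $A$, so the objective becomes to exhibit an interval of such $\lambda$ of length at least $\exp(-\delta^{-C})$.

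Fix a smoothing parameter $\tau>0$ to be tuned against $\delta$, and decompose $\mathcal N_\lambda(\mathbbm{1}_A) = \mathcal N_\lambda^{\mathrm{sm}}(\mathbbm{1}_A) + \mathcal E_\lambda^\tau$ by replacing each spherical factor measure $d\sigma_{\lambda\Delta_i}$ implicit in $\mathcal N_\lambda$ by its convolution with a Gaussian of width $\tau\lambda$. The structured part $\mathcal N_\lambda^{\mathrm{sm}}$ is, up to tolerable errors, a weighted product of thickened integrals of $\mathbbm{1}_A$, and a direct density/pigeonhole argument yields $\mathcal N_\lambda^{\mathrm{sm}}(\mathbbm{1}_A)\gtrsim\delta^{N}$ throughout a nontrivial interval $J\subseteq(0,1)$, where $N=\prod_i(k_i+1)$ is the number of vertices of the product simplex. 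The remaining task reduces to proving $|\mathcal E_\lambda^\tau(\mathbbm{1}_A)|<\tfrac12\mathcal N_\lambda^{\mathrm{sm}}(\mathbbm{1}_A)$ on a subinterval of $J$ of length at least $\exp(-\delta^{-C})$, which follows from a pointwise-in-$\lambda$ cancellation estimate $|\mathcal E_\lambda^\tau(\mathbbm{1}_A)|\leq \eta(\tau)$ upon choosing $\tau$ sufficiently small.

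The cancellation estimate is the heart of the proof and the main obstacle. The strategy is $n$ successive applications of Cauchy--Schwarz, one in each of the $n$ product directions, exploiting that $\Delta_1\times\cdots\times\Delta_n$ is combinatorially an $n$-partite $n$-regular hypergraph with $k_i+1$ vertices in the $i$-th part. Each Cauchy--Schwarz step duplicates the variables of exactly one factor while preserving the hypergraph structure on the remaining factors, and after $n$ steps the form is recognized as a multilinear singular integral attached to this hypergraph; the requisite decay in $\tau$ is then extracted from a frequency-side decomposition of the smoothed-minus-bare spherical measures combined with known cancellation estimates of Durcik--Kova\v{c} type for such singular integral forms. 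Setting up the Cauchy--Schwarz tree so that at each stage the output form remains amenable to these bounds, and tracking the quantitative dependence on $\tau$, is the principal technical challenge. Given the estimate, tuning $\tau$ against $\delta$ produces an interval of scales of length $\exp(-\delta^{-C})$ on which $\mathcal N_\lambda(\mathbbm{1}_A)$ is strictly positive, completing the proof.
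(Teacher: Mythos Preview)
Your outline has a structural gap that prevents the argument from closing. With a single smoothing parameter $\tau$ and the two-term split $\mathcal N_\lambda=\mathcal N_\lambda^{\mathrm{sm}}+\mathcal E_\lambda^\tau$, the two demands you place on $\tau$ are incompatible. The density lower bound $\mathcal N_\lambda^{\mathrm{sm}}(\mathbbm 1_A)\gtrsim\delta^N$ genuinely needs \emph{large} smoothing (of order $\tau\sim 1$), since one must replace each spherical factor by something bounded below on a full cube of side $\sim\lambda$; for small $\tau$ the smoothed measure is concentrated near the sphere and no such lower bound is available (indeed $\mathcal N_\lambda^{\mathrm{sm}}\to\mathcal N_\lambda^{0}$ as $\tau\to 0$, and the latter can vanish for individual $\lambda$). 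On the other hand, your proposed pointwise estimate $|\mathcal E_\lambda^\tau|\le\eta(\tau)$ requires $\tau$ small. You cannot have both.

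The paper resolves this by a \emph{three}-term decomposition with two scales $\varepsilon\ll 1$ and $1$:
\[
\mathcal N_\lambda^{0}=\mathcal N_\lambda^{1}+(\mathcal N_\lambda^{\varepsilon}-\mathcal N_\lambda^{1})+(\mathcal N_\lambda^{0}-\mathcal N_\lambda^{\varepsilon}).
\]
The first term carries the structured lower bound $\gtrsim\delta^N$. The third (``uniform'') term is what admits the pointwise bound $\lesssim\varepsilon^{1/2}$, coming simply from Fourier decay of the spherical measure; no hypergraph cancellation is needed there. The middle (``error'') term is where the hypergraph singular integral enters, and here the crucial point your proposal misses is that one does \emph{not} obtain a pointwise-in-$\lambda$ bound. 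What is proved instead is a summed estimate over $J$ dyadic scales,
\[
\sum_{j=1}^J\bigl|\mathcal N_{\lambda_j}^{\varepsilon}-\mathcal N_{\lambda_j}^{1}\bigr|\lesssim J^{1-2^{-n}}\varepsilon^{-C},
\]
which beats the trivial bound $J$ by a factor $J^{-2^{-n}}$; a pigeonhole over $j$ then produces a single dyadic interval on which the error is small. Choosing first $\varepsilon$ (to control the uniform term against $\delta^N$) and then $J\sim\varepsilon^{-C'}\delta^{-C''}$ yields an interval of length $2^{-J}\ge\exp(-\delta^{-C})$. Your description places the $n$-fold Cauchy--Schwarz/hypergraph cancellation on the wrong term and asks it to deliver a pointwise estimate it cannot give; the sum-over-scales saving together with pigeonholing is the missing idea.
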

The case of a single simplex, i.e.\@ case  $n=1$ of   Theorem \ref{mainthm}, is a corollary of a stronger quantitative finitary result by Bourgain  \cite[Proposition 3]{Bou86}. The exact  quantitative bound on the length of the interval is not written explicitly in \cite{Bou86}, but it can be tracked down from its proof in \cite[Section 3]{Bou86}.
Lyall and  Magyar \cite{LM19} proved a version of Theorem \ref{mainthm} for an arbitrary finite product of simplices  but
with  the lower bound on the length of the interval $I$ being a reciprocal of a tower of exponentials 
  \[(\exp(\exp(\cdots \exp(\delta^{-C(\Delta_1,\ldots, \Delta_n)})) \cdots )) )^{-1} \] 
of length $n$. Their proof is based on a weak hypergraph regularity lemma developed in the setting of  Euclidean spaces.   Kova\v{c} and the first author \cite{DK20}  proved    Theorem \ref{mainthm} when $k_i=1$ for all $1\leq i\leq n$, in which case  each simplex has only two points, i.e.\@ their product is the vertex set of an $n$-dimensional rectangular box.  
Theorem \ref{mainthm} is thus an extension of \cite{DK20}   and a     quantitative improvement of \cite{LM19}. 

In analogy with the papers \cite{Bou86} and \cite{LM19} we  formulated Theorem \ref{mainthm} in the lowest dimension possible, i.e. for sets $A\subseteq [0,1]^{d_1}\times \cdots \times [0,1]^{d_n}$ where $d_i=k_i+1$. As in these papers,  our result extends to all higher dimensions $d_i \geq k_i+1$ with mainly notational changes throughout the proof. 
 
  Our approach builds on the methods from \cite{CMP15} and \cite{DK20}, which detect simplices through certain counting forms.  Following \cite{CMP15} and \cite{DK20}, in Section \ref{sec:forms}   we split the counting forms into their structured,  error, and uniform parts and analyze each of the parts in the subsequent sections. The error part is handled through a certain pigeonholing argument.  The main novelty of this paper is that  to estimate the error part, we need to show   cancellation in    multilinear singular integral forms associated with $n$-partite $n$-regular hypergraphs.  
  
Certain  dyadic models of the singular integral forms that we consider in this paper are known to be bounded in an open range of Lebesgue spaces. This was shown  by Kova\v{c}  \cite{Kov11}, who proved $L^p$ estimates for multilinear forms associated with graphs, and the second author  \cite{Sti19}, who extended this result to  hypergraphs. On the other hand, boundedness of the  continuous variants of the forms in \cite{Kov11} and their modified versions, which appear  in this paper, is  still  unknown in general. The only currently known bounds are in the special case of hypercubes, which are hypergraphs having two vertices in each partition class. This case  was investigated by    Slavikova, Thiele, and the first author \cite{DST}, who proved  estimates for the associated forms in an open range of Lebesgue exponents. 
However, to  establish Theorem \ref{mainthm}, one does not need to show $L^p$ boundedness of the  forms that arise in the proof  since a  weaker statement    suffices to deduce our combinatorial result. Namely,  it is enough to   establish only non-trivial quantitative cancellation bounds for the corresponding singular integral forms. That is, if the singular integral kernel  consists of finitely many consecutive scales, it suffices to   show a quantitative improvement over the trivial bound in the number of scales.  
  Moreover, to prove Theorem \ref{mainthm} it is enough to assume that all input functions are supported in the unit cube and bounded by $1$. 
The proof of our  cancellation  estimates uses an  extension of  the methods used to estimate singular Brascamp-Lieb type integrals as in \cite{DK20, DST, Kov11}, to the hypergraph setting.

A connection between cancellation estimates and  Euclidean Ramsey theorems was made explicit in   \cite{DK20}, which deals with arithmetic progressions in the unit cube. The paper \cite{DK20} shows that if $n\geq 3$, $p\in [1,\infty)\setminus\{1,2,\ldots, n-1\}$ and the dimension $d$ is large enough, then there exists a constant $C=C(n,p,d)$ such that the following holds. If $\delta \in (0,1/2]$ and    $A\subseteq [0,1]^d$ has Lebesgue measure at least $\delta$, then there is an interval $I$ whose length depends only on $n, p, d$, and $\delta$, such that for all $\lambda \in I$, the set $A$ contains  arithmetic progression $x,x+y,\ldots, x+(n-1)y\in A$ with $\|y\|_{\ell^p}=\lambda$. Here, the length of $I$ is a reciprocal of the double exponential of a negative power of $\delta$ when $n=3$ or  $4$ and the triple exponential when $n\geq 5$, which reflects the best known bounds in Szemer{\'e}di's theorem. The case $n=3$ also follows along the lines of the proof by Cook, Magyar, and Pramanik \cite{CMP15}.  However, we point out that in contrast with Theorem \ref{mainthm}, the   size of the gap $y$ in  \cite{DK20}  cannot be measured in all $\ell^p$ norms, in particular not in $\ell^2$,  as generalizations of Bourgain's counterexample \cite{Bou86} show. Moreover,  the result in \cite{DK20} is at present only known in sufficiently large dimensions.

In \cite{DK20},  bounds for the corresponding error part follow from cancellation estimates for singular integrals similar to the multilinear Hilbert transform. While non-trivial cancellation for the truncations of the  multilinear Hilbert transform to finitely many consecutive scales was first  shown   by Tao \cite{Tao15} and later also by Zorin-Kranich \cite{Zor15}, and Kova\v{c}, Thiele and the first author \cite{DKT16},  its boundedness on Lebesgue spaces
 is still one of the major open problems in harmonic analysis. The only known bounds exist for the bilinear Hilbert transform due to Lacey and Thiele \cite{LT}. 
In contrast to this, we expect that  boundedness of continuous variants of  multilinear singular integrals associated with hypergraphs, which arise in the proof of Theorem \ref{mainthm}, is within reach of current techniques. This is a problem for future investigation. In view of this,   we aim to keep the estimates for the error part and consequently also the proof of Theorem \ref{mainthm} as short as possible, i.e.  we   do not aim to optimize the bound in the number of scales for the singular integral as it would not improve the quantitative bound  in Theorem \ref{mainthm}. 

There are related questions on subsets of the Euclidean space of positive upper Banach density, where one is concerned with finding all large dilates of a fixed geometric configuration.  The upper Banach density of a measurable set $A\subseteq \R^d$ is defined as
\[\overline{\delta}(A) :=  \limsup_{N\to \infty} \sup_{x\in \R^d} \frac{|A\cap (x + [0,N]^d)|}{N^d} >0.\]
The simplest example is a two-point pattern, for which it holds that whenever $A\subseteq \R^2$ satisfies $\overline{\delta}(A)>0$, then there is a threshold $\lambda_0=\lambda_0(A)$ such that for each $\lambda \geq \lambda_0$, $A$ contains points $x,y$ with $|x-y|=\lambda$.  
This was conjectured by Sz{\'e}kely \cite{Sz83}, the first proof   was announced by  Furstenberg, Katznelson and Weiss \cite{FKW},  while in the meantime it was also independently shown in  \cite{Bou86} and by Falconer and Marstrand \cite{FM}.
The paper \cite{Bou86} also  generalizes this result for a two-point pattern and  shows existence of isometric copies of all large dilates  a fixed non-degenerate $(k+1)$-point simplex,  in sets of positive upper Banach density in $\R^{k+1}$. 
The paper  \cite{LM20}   extends this result to distance graphs, while the papers \cite{LM18, LM19} extend it also to products of non-degenerate simplices. More precisely, in \cite{LM19}  it is shown that if one is given $\Delta_1,\ldots, \Delta_n$ as in Theorem \ref{mainthm} and  $A\subseteq \R^{k_1+1}\times \cdots \times \R^{k_n+1}$  satisfies $\overline{\delta}(A)>0$,
then there exists $\lambda_0 = \lambda_0(A,\Delta_1,\ldots, \Delta_n)>0$ such that for all $\lambda \geq \lambda_0$,  the set $A$ contains $\Delta_1'\times \cdots \times \Delta_n'$,  where each $\Delta_i'$  an isometric copy of $\lambda \Delta_i$. 

The corresponding result for all large dilates of rectangular boxes in sets of positive upper Banach density was also obtained in \cite{DK18} using harmonic analysis techniques,  but with some loss in the dimension of the ambient space. As commented in \cite{DK20}, the arguments therein also give the corresponding result for cubes in sets of positive upper Banach density  and they extend to boxes as well. Similarly, the techniques used in this paper   can be extended to reprove the full  result from \cite{LM19} in sets of positive upper Banach density, provided one shows $L^p$ boundedness for the forms associated with hypergraphs, which arise in the error part, or cancellation estimates to arbitrary, not necessarily consecutive scales.

Recently, Kova\v{c} \cite{Kov20}   initiated the study of density theorems for anisotropic point configurations. The paper \cite{Kov20} established existence of all large dilates of
anisotropic simplices, anisotropic boxes, and an anisotropic version of the result on distance trees, respectively,  extending several results contained in  \cite{Bou86, LM19}, and  \cite{LM20} to the anisotropic setting. Proving   anisotropic versions of Theorem \ref{mainthm} and the corresponding results in sets of positive density is yet another possible line of investigation.\\

 {\em Notation:} For two non-negative quantities $A,B$ we write $A\lesssim B$ if there exists a constant $C>0$ such that $A\leq C B$. We write $A\sim B$ if both $A\lesssim B$ and $B\lesssim A$.  In this paper, all constants are allowed to depend on  the dimensions    $k_i,n$, and simplices $\Delta_i$ and we will not explicitly denote that in the notation.

\section{Counting forms and their decomposition}
\label{sec:forms}
We begin this section by defining counting forms which detect isometric copies of dilates of products of fixed non-degenerate simplices in a subset of the unit cube. 

Given   $\Delta = \{0, v^1,\ldots, v^{d}\}$, where $v^1,\ldots, v^d$ are linearly independent vectors in $\R^d$, we shall view them as vertices of a simplex in  $\R^{d+1}$ by identifying $\R^d$ with $\R^d\times \{0\} \subseteq \R^{d+1}$. 
First we recall the definitions of suitable spherical measures associated with  $\lambda \Delta$, $\lambda>0$, as defined in \cite{Kov20} and previously used   in \cite{Bou86, LM18, LM19}.
 
For each $1\leq j \leq d$, let  us write $v^j = |v^j| u^j$   where   $u^j$ are unit vectors. For $2\leq j \leq d$,  the orthogonal projection of $u^j$ onto $\textup{span}\{u^1,\ldots, u^{j-1}\}$ is given by 
\[\beta_{j,1}u^1+\beta_{j,2}u^2+\ldots \beta_{j,j-1}u^{j-1}\]
for some $\beta_{j,1},\ldots, \beta_{j,j-1}\in \R$. Let $\widetilde{\sigma}$ denote the normalized surface measure on the unit sphere $S^{d}$ in  $\R^{d+1}$. For $2\leq j \leq d$ let 
$$\widetilde{\sigma}^{\Delta, x^1,\ldots, x^{j-1}}$$
be  the normalized surface measure on the $(d-j+1)$-dimensional sphere  in $\R^{d+1}$ centered at
\[\beta_{j,1}\frac{x^1}{|x^1|}+\beta_{j,2}\frac{x^2}{|x^2|}+\ldots \beta_{j,j-1}\frac{x^{j-1}}{|x^{j-1}|}\]
with radius being equal to 
$\textup{dist}(u^j,\textup{span}\{u^1,\ldots, u^{j-1}\}),$ which lies in the $(d-j+2)$-dimensional plane in $\R^{d+1}$ that is orthogonal to $\textup{span}\{u^1,\ldots, u^{j-1}\}$. 

For a Borel measure $\mu$ and a  Borel set $A$  
we define
$$\mu_\lambda(A) := \mu(\lambda^{-1}A)$$ 
and we write $\sigma^{\Delta}:= \widetilde{\sigma}_{|v^1|}$, $\sigma^{\Delta,x^1,\ldots, x^{j-1}}:= \widetilde{\sigma}^{\Delta,x^1,\ldots, x^{j-1}}_{|v_j|} $. 
We also define the measure $\sigma_{\Delta,\lambda}$ by 
\[\sigma_{\Delta,\lambda}(A) :=  \int_{\R^{d(d+1)}}  \mathbbm{1}_A(y^1,\ldots, y^{d}) \, d\sigma_{\lambda}^{\Delta,y^1,\ldots,y^{d-1}}(y^{d})\cdots d\sigma_{\lambda}^{\Delta,y^1}(y^2)\,d\sigma_{\lambda}^{\Delta}(y^1) \]
where   $A$ is a Borel subset of $\R^{d(d+1)}$. The support of  $\sigma_{\Delta,\lambda}$ is the set of all points $y$ such that $\{0,y^1,\ldots, y^{d}\}$ is isometric to $\lambda \Delta$. Note that   the definition of $\sigma_{\Delta,\lambda}$  is symmetric in the choice of the order of integration in  $y^1,\ldots, y^{d}$. This 
can be seen by rewriting    $\sigma_{\Delta,\lambda}$ using the Haar measure on  $SO(d+1,\R)$, see for instance formulae (1) and (9) in  \cite{Bou86}. By $d\sigma_{\Delta,\lambda}(\cdot -c)$ we will denote integration with respect to the measure $\sigma_{\Delta,\lambda}$ translated by $c\in \R^{d(d+1)}$.

Let us fix  $n\geq 1$ and the dimensions $k_i\geq 1$ for $1\leq i \leq n$. We also fix  vertices of non-degenerate simplices of $k_i$ points $\Delta_i$ in $\R^{k_i+1},\,1\leq i \leq n$. 

 For any tuple of integers $L=(l_1,\ldots, l_n)$ satisfying $1\leq l_i\leq k_i$ for each $1\leq i \leq n$,  let 
$\mathcal{H}_L$ be the set of all functions $h\colon~\{ 1, \ldots, n \} \rightarrow \N\cup \{0\}$ satisfying $h(i) \leq l_i$ for each $1\leq i \leq n$. We will write $L+1:= L+(1,\ldots, 1)$.  
We will also denote 
\[K:=(k_1,\ldots , k_n), \quad k:=k_1+\cdots + k_n, \quad  \textup{and} \quad \H:=\H_K. \]
For $h \in\H_L$ and 
\[x=  (x^0_1,\ldots, x^{l_1}_1, x_2^0,\ldots x_2^{l_2}, \ldots, x_n^0,\ldots, x_n^{l_n})\in  \R^{(K+1)\cdot (L+1)}\] we define $\Pi_h:\R^{(K+1)\cdot (L+1)}\to \R^{k+n}$ by 
$$\Pi_h {x} := (x^{h(1)}_1,\ldots,x^{h(n)}_n).$$ 
 
Now we are ready to define the counting forms which detect isometric copies of dilates of our simplices. 
For $\lambda>0$ and a measurable set $A\subseteq \R^{k+n}$  
we define the form  
\begin{align*}
\mathcal{N}^0_{\lambda}(A) =
\int_{\R^{(K+1) \cdot (K+1)}}  \Big( \prod_{h \in \H} \mathbbm{1}_{A}(\Pi_h x) \Big ) 
  \prod_{i=1}^n d\sigma_{\Delta_i,\lambda} ((x_i^{r}-x_i^0)_{r=1}^{k_i}) \, dx_i^0.
\end{align*}
Note that if 
 $\mathcal{N}^0_{\lambda}(A) > 0,$
then $A$ contains   $\Delta_1'\times \cdots \times \Delta_n'$,  where each $\Delta_i'$ is an isometric copy of the dilated simplex $\lambda \Delta_i$.

If $\varphi$ is a function on $\R^m$ and $t\in \R$, we denote  $\varphi_t(x):=t^{-m}\varphi(t^{-1}x)$. 
Let $g$ be    the   Gaussian
\[g(x):=e^{-\pi |x|^2}.\] The  dimension of Gaussians will always be understood from the context and will not be part of the notation.   For $\varepsilon>0$ we define a smoothed form  
\[
\mathcal{N}^{\varepsilon}_{\lambda}(A) :=
\int_{\R^{(K+1) \cdot (K+1)}  }  \Big( \prod_{h \in \H} \mathbbm{1}_{A}(\Pi_h x) \Big ) \Big(  \prod_{i=1}^n {\sigma}_{\Delta_i,\lambda} \ast {g}_{\varepsilon\lambda} ((x_i^{r}-x_i^0)_{r=1}^{k_i}) \Big ) \, dx . \]
In the limit as $\varepsilon \to 0$ it recovers   $\mathcal{N}^{0}_{\lambda}(A)$. This can be seen similarly  as in \cite[Section 3]{DK20}. 

Theorem \ref{mainthm} will be deduced from the following three lemmas. 
\begin{lemma}\label{strlemma}  
There exists a constant $C>0$ such that for any $\lambda\in (0,1]$, $\delta \in (0,1/2]$, and  a measurable set  $A\subseteq [0,1]^{k_1+1}\times \cdots \times [0,1]^{k_n+1}$    with $|A|\geq \delta$, 
\begin{equation}\label{lemma1}
\mathcal{N}^{1}_{\lambda}(A) \geq C  \delta^{(k_1+1)(k_2+1)\cdots (k_n+1)}.
\end{equation}
\end{lemma}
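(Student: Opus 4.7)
The plan is to reduce Lemma \ref{strlemma} to a Sidorenko-type lower bound for a Gaussian-weighted counting form associated with the complete $n$-partite hypergraph $K_{k_1+1,\ldots,k_n+1}$, following the template from \cite{CMP15,DK20}.

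The first step is to rewrite the smoothed spherical kernel using the rotational decomposition
\[
\sigma_{\Delta_i,\lambda}=\int_{SO(k_i+1)}\bigotimes_{r=1}^{k_i}\delta_{\lambda R_i v_i^r}\,dR_i,
\]
together with the factorization of $g_\lambda$ on $\R^{k_i(k_i+1)}$ as $\bigotimes_{r=1}^{k_i} g_\lambda$ on $\R^{k_i+1}$. This yields
\[
\sigma_{\Delta_i,\lambda}\ast g_\lambda(y_i^1,\ldots,y_i^{k_i})=\int_{SO(k_i+1)}\prod_{r=1}^{k_i}g_\lambda(y_i^r-\lambda R_i v_i^r)\,dR_i.
\]
After substituting into $\mathcal{N}^1_\lambda(A)$ and relabelling $u_i^r := x_i^r$, the form becomes
\[
\mathcal{N}^1_\lambda(A)=\int_{\prod_iSO(k_i+1)}\int\prod_{h\in\mathcal{H}}\mathbbm{1}_A(u_1^{h(1)},\ldots,u_n^{h(n)})\prod_{i=1}^n\prod_{r=1}^{k_i}g_\lambda(u_i^r-u_i^0-\lambda R_i v_i^r)\,du\,dR,
\]
i.e.\ a rotational average of a complete $n$-partite counting form with Gaussian edge weights.

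The second step is to prove a Sidorenko-type lower bound on the inner form by iterating Cauchy--Schwarz along the $n$ vertex classes. For fixed $R$ and each $i=1,\ldots,n$, freezing the variables $(u_j^r)_{j\neq i}$ makes the product of indicators factorize as $\prod_{l=0}^{k_i}p_i(u_i^l)$ with $p_i=\prod_{(h_j)_{j\neq i}}\mathbbm{1}_A(\,\cdot\,,u_2^{h_2},\ldots,u_n^{h_n})$. Splitting each Gaussian symmetrically via the semigroup identity $g_\lambda=g_{\lambda/\sqrt{2}}\ast g_{\lambda/\sqrt{2}}$ and then applying Cauchy--Schwarz in the duplicated variable, together with Jensen's inequality for the convex function $t\mapsto t^{k_i+1}$, produces a $(k_i+1)$-th power of the marginal form over the $i$-th class. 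Iterating over $i=1,\ldots,n$ accumulates the full exponent $\prod_i(k_i+1)=|\mathcal{H}|$; integration over $R\in\prod_iSO(k_i+1)$, a probability measure, preserves the bound and gives $\mathcal{N}^1_\lambda(A)\gtrsim\delta^{\prod_i(k_i+1)}$.

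The main obstacle is the $R$-dependent Gaussian shifts $\lambda R_iv_i^r$, which prevent a direct application of tensor Cauchy--Schwarz. The symmetric splitting $g_\lambda=g_{\lambda/\sqrt{2}}\ast g_{\lambda/\sqrt{2}}$ lets one distribute half of each Gaussian weight to each of the two duplicated vertices, absorbing the shift into an auxiliary convolution variable and keeping the estimate self-dual. Since $A\subseteq[0,1]^{k+n}$, $\lambda\leq 1$, and $|v_i^r|$ is bounded, all convolutions in the iteration remain supported in a region of bounded Lebesgue measure, so the implicit constants can be chosen uniformly in $R,\lambda,\delta$, delivering the constant $C=C(\Delta_1,\ldots,\Delta_n)$ in the statement.
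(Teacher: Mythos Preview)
Your rotational decomposition in Step~1 is fine, but the argument in Step~2 has a genuine gap. You correctly identify the obstacle---the $R$-dependent shifts $c_i^r=\lambda R_iv_i^r$---but the proposed cure does not work. After the semigroup split $g_\lambda=g_{\lambda/\sqrt2}\ast g_{\lambda/\sqrt2}$ the shift does not disappear; it simply migrates to one of the two factors. For instance already in the case $n=1$, $k_1=1$ one obtains
\[
\int f(x^0)f(x^1)\,g_\lambda(x^1-x^0-c)\,dx
=\int F(z)\,F(z-c)\,dz,\qquad F=f\ast g_{\lambda/\sqrt2},
\]
and there is no Cauchy--Schwarz or Jensen step that turns the right-hand side into a quantity controlled from below by $(\int F)^2$. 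Cauchy--Schwarz gives upper bounds; Jensen requires the form to factorize over the vertices $x_i^0,\dots,x_i^{k_i}$, which the star-shaped kernel $\prod_r g_\lambda(x_i^r-x_i^0-c_i^r)$ prevents because $x_i^0$ is coupled to every $x_i^r$. Nothing you wrote explains how ``absorbing the shift into an auxiliary convolution variable'' produces a self-dual expression, and I do not see one.

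The paper avoids this issue entirely by never passing to rotations. It lower-bounds the \emph{whole} smoothed kernel pointwise,
\[
\sigma_{\Delta_i,\lambda}\ast g_\lambda \;\gtrsim\; \lambda^{-k_i(k_i+1)}\,\mathbbm{1}_{[-\lambda,\lambda]^{k_i(k_i+1)}},
\]
which replaces the spherical--Gaussian weight by a shift-free box indicator at scale $\lambda$. After partitioning $[0,1]^{k+n}$ into dyadic cubes of side $2^{-m}\sim\lambda$, the form reduces to a sum of pure product averages $\fint_{Q_1^{k_1+1}\times\cdots\times Q_n^{k_n+1}}\prod_{h\in\mathcal H}\mathbbm{1}_A(\Pi_h x)\,dx$, for which the Sidorenko-type bound follows by a clean induction on $n$ using Jensen (this is inequality~\eqref{lowerbd}). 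A final Jensen over the boxes gives $\mathcal N^1_\lambda(A)\gtrsim\delta^\kappa$. Your approach can be repaired by inserting the elementary pointwise estimate $g_\lambda(y-c)\gtrsim_{\Delta} g_{\lambda/\sqrt2}(y)$ (valid since $|c|\lesssim_\Delta\lambda$) to remove the shifts before any Jensen step; but once you do that, the rotational decomposition is superfluous and you have essentially reproduced the paper's proof.
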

\begin{lemma}\label{errlemma}
There exists a constant $C>0$ such that for any    $\varepsilon\in (0,1]$, positive integer    $J$, any $ \lambda_j\in (2^{-j},2^{-j+1}]$,  $1\leq j \leq J$,   and any measurable set  $A\subseteq [0,1]^{k_1+1}\times \cdots \times [0,1]^{k_n+1}$, 
\begin{equation}\label{lemma2}
\sum_{j=1}^{J} \big|\mathcal{N}^{\varepsilon}_{\lambda_j}(A)-\mathcal{N}^{1}_{\lambda_j}(A)\big| \leq C J^{1-2^{-n}} \varepsilon^{-1-\sum_{i=1}^n k_i(k_i+2)}.
\end{equation}
\end{lemma}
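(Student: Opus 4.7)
The plan is to follow the singular Brascamp--Lieb paradigm of \cite{Kov11,DK20,DST}, extended to the hypergraph setting of \cite{Sti19}, and combine a telescoping decomposition of the kernel with an iterated Cauchy--Schwarz scheme adapted to the $n$-partite structure of $\H$. Writing the kernels of $\mathcal{N}^\varepsilon_\lambda$ and $\mathcal{N}^1_\lambda$ as $\prod_{i=1}^n(\sigma_{\Delta_i,\lambda}\ast g_{\varepsilon\lambda})$ and $\prod_{i=1}^n(\sigma_{\Delta_i,\lambda}\ast g_\lambda)$ respectively, I first apply the telescoping identity $\prod a_i - \prod b_i = \sum_i a_1\cdots a_{i-1}(a_i-b_i) b_{i+1}\cdots b_n$ to decompose $\mathcal{N}^\varepsilon_\lambda - \mathcal{N}^1_\lambda$ into $n$ pieces, each carrying a single cancellative factor $\sigma_{\Delta_{i_0},\lambda}\ast(g_{\varepsilon\lambda}-g_\lambda)$ in exactly one direction $i_0$. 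By the triangle inequality it suffices to bound the sum in $j$ for each piece separately, and I fix $i_0=1$ for concreteness, denoting the resulting contribution at scale $\lambda_j$ by $T_j$.

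To bound $\sum_j |T_j|$, I dualize by selecting signs $\alpha_j\in\{\pm1\}$ with $\sum_j|T_j|=\sum_j\alpha_j T_j$ and then apply $n$ successive Cauchy--Schwarz inequalities, one for each direction $i=1,\dots,n$, in the style of the standard proof of the Gowers box/$U^n$-norm inequality. Each such Cauchy--Schwarz duplicates the coordinates $(x_i^0,\dots,x_i^{k_i})$ into two fresh independent copies, pairs the indicator factors $\mathbbm{1}_A(\Pi_h x)$ according to the hypergraph $\H$, and eliminates one layer of indicators via the trivial bound $\|\mathbbm{1}_A\|_\infty\le 1$. After the $n$-fold iteration every indicator factor is discarded and the $2^n$-th power of $\sum_j\alpha_jT_j$ is controlled by a $2^n$-linear multi-scale expression $\sum_{j_1,\dots,j_{2^n}}\alpha_{j_1}\cdots\alpha_{j_{2^n}}\Phi(j_1,\dots,j_{2^n})$ involving $2^n$ copies of the cancellative kernel at (a priori) distinct scales, integrated against a product of Gaussian-convolved spherical measures in the doubled variables.

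The key estimate is then a Plancherel / near-orthogonality argument on this $2^n$-linear form in the variables corresponding to the first direction. The Gaussian differences $g_{\varepsilon\lambda_j}-g_{\lambda_j}$ have Fourier transform concentrated in the dyadic annulus $|\xi|\sim\lambda_j^{-1}$, so the Fourier-side identity arising from the $2^n$-linear integration imposes essentially a single frequency constraint on $(\xi_1,\dots,\xi_{2^n})$, leaving $2^n-1$ of the scale indices free and producing a bound $|\sum_{j_1,\dots,j_{2^n}}\alpha_{j_1}\cdots\alpha_{j_{2^n}}\Phi|\lesssim J^{2^n-1}\varepsilon^{-C'}$. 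The $\varepsilon$-loss comes from trivial $L^\infty$ estimates of the form $\|\sigma_{\Delta_i,\lambda}\ast g_{\varepsilon\lambda}\|_\infty\lesssim(\varepsilon\lambda)^{-k_i(k_i+1)/2}$ applied to the doubled Gaussian-convolved spherical measures on their supports, whose dimension is $k_i(k_i+1)/2$ inside the ambient $\R^{k_i(k_i+1)}$; tallying the duplications gives the exponent $1+\sum_{i=1}^n k_i(k_i+2)$. Taking $2^n$-th roots yields $\sum_j|T_j|\lesssim J^{1-2^{-n}}\varepsilon^{-1-\sum_i k_i(k_i+2)}$, and summing over the $n$ telescoping pieces completes the proof.

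The hardest part will be the combined combinatorial and Fourier-analytic bookkeeping of steps two and three: in each of the $n$ Cauchy--Schwarz steps one must choose which coordinate group to duplicate so that the indicators pair consistently with the structure of $\H$ while the cancellation in direction $i_0=1$ is preserved intact through all $n$ iterations, and afterwards one must analyze a $2^n$-linear form in $2^n$ scale parameters where the lower-dimensional spherical measures $\sigma_{\Delta_i,\lambda}$ interact with the doubled Gaussians via oscillatory Bessel-type Fourier transforms. Pushing the dyadic hypergraph framework of \cite{Sti19} and the continuous singular Brascamp--Lieb estimates of \cite{Kov11,DK20,DST} to the present setting --- where the $2^n$-linear expression is genuinely multi-scale and the spherical measures are not tensor products --- is where the main technical work lies.
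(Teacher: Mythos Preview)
Your plan has a genuine gap at the Cauchy--Schwarz stage; the difficulty you flag in your last paragraph is an actual obstruction, not bookkeeping. Duplicating the coordinate block $i_0=1$ squares the cancellative kernel and destroys its mean-zero structure, while duplicating any block $i\neq 1$ removes no indicators: every factor $\mathbbm{1}_A(\Pi_h x)$ depends on some $x_1^{h(1)}$, so all of them survive each such pairing (in fact their number doubles). There is no sequence of $n$ Gowers-type duplications that simultaneously preserves the cancellation in direction $1$ and discards the $\mathbbm{1}_A$ factors. Even granting your endgame form, the near-orthogonality claim is unfounded: once the indicators are gone the $2^n$ cancellative kernels sit in independent duplicated copies of $\R^{k_1(k_1+1)}$, so Plancherel imposes no constraint linking the scales $j_1,\dots,j_{2^n}$, and after the $2^n$-th root you recover only the trivial bound $J$.

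The paper proceeds quite differently. It writes $\mathcal{N}^1_\lambda-\mathcal{N}^\varepsilon_\lambda=\frac{1}{2\pi}\sum_v\int_\varepsilon^1(\cdots)\,dt/t$ via the heat equation, then dominates each $\sigma_{\Delta_i,\lambda_j}\ast g_{t\lambda_j}$ pointwise by a superposition of pure Gaussians at scales comparable to $2^{-j}t$; this step costs exactly $\varepsilon^{-\varrho}$ with $\varrho=\sum_i k_i(k_i+2)$, and $\int_\varepsilon^1 dt/t$ supplies the remaining $\varepsilon^{-1}$. The resulting purely Gaussian forms $\Theta^{(v)}_{L,a,b,\alpha}$ are then shown to be $O((\log(b/a))^{1-2^{-n}})$ by downward induction on the number of indices $i$ with $l_i=1$. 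Each inductive step is a \emph{single} Cauchy--Schwarz in all variables except one $x_v^w$, collapsing the $v$-th block to two vertices. The crucial point is that the collapsed forms with $l_v=1$ and matched scale parameters are \emph{nonnegative} (the convolution identity for $(\Delta g)_{\sqrt{2}s}$ in terms of $\sum_m(\partial_m g)_s\ast(\partial_m g)_s$ turns them into sums of squares), and their sum over all $v$ telescopes, again by the heat equation, to a bounded difference $\Xi_{L,a,\alpha}-\Xi_{L,b,\alpha}=O(1)$. Positivity plus telescoping, not Fourier orthogonality, is what produces the saving $J^{-2^{-n}}$.
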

 
\begin{lemma}\label{unilemma}
There exists a constant $C>0$ such that for any $\lambda,\,\varepsilon \in (0,1]$ and any measurable set $A\subseteq [0,1]^{k_1+1}\times \cdots \times [0,1]^{k_n+1}$, 
\begin{equation}\label{lemma3}
\big|\mathcal{N}^{0}_{\lambda}(A)-\mathcal{N}^{\varepsilon}_{\lambda}(A)\big| \leq C \varepsilon^{\frac{1}{2}}.
\end{equation}
\end{lemma}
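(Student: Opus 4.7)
The plan is a telescoping reduction to the single-simplex case followed by a Fourier-analytic estimate, in the spirit of \cite[Section 3]{DK20}.

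\emph{Step 1 (reduction to $n=1$).} I would telescope in $i$:
\[\prod_{i=1}^n(\sigma_{\Delta_i,\lambda}\ast g_{\varepsilon\lambda}) - \prod_{i=1}^n\sigma_{\Delta_i,\lambda} = \sum_{i=1}^n\Big(\prod_{j<i}\sigma_{\Delta_j,\lambda}\ast g_{\varepsilon\lambda}\Big)\big(\sigma_{\Delta_i,\lambda}\ast g_{\varepsilon\lambda}-\sigma_{\Delta_i,\lambda}\big)\Big(\prod_{j>i}\sigma_{\Delta_j,\lambda}\Big),\]
so it suffices to bound each summand by $C\varepsilon^{1/2}$. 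For fixed $i$, each factor indexed by $j\neq i$ has unit total mass, while the indicators $\mathbbm{1}_A(\Pi_hx)$ with $h(j)=0$ confine each $x_j^0$ to $[0,1]^{k_j+1}$. Integrating out the $j\neq i$ variables, the product $\prod_h\mathbbm{1}_A(\Pi_hx)$ collapses pointwise to $\prod_{r=0}^{k_i}\mathbbm{1}_B(x_i^r)$ for an appropriate slice $B\subseteq[0,1]^{k_i+1}$ of $A$, reducing the lemma to the single-simplex case uniformly in measurable $B$.

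\emph{Step 2 (single-simplex Fourier bound).} Writing $\Delta=\Delta_i$, $k=k_i$ and $F(y):=\int\prod_{r=0}^k\mathbbm{1}_B(x+y^r)\,dx$ with $y^0:=0$, Fubini and symmetry of $g_{\varepsilon\lambda}$ give
\[\mathcal{N}^0_\lambda(B)-\mathcal{N}^\varepsilon_\lambda(B) = \int (F-F\ast g_{\varepsilon\lambda})(y)\,d\sigma_{\Delta,\lambda}(y) = \int \hat F(\xi)\,(1-\hat g(\varepsilon\lambda\xi))\,\overline{\hat\sigma_{\Delta,\lambda}(\xi)}\,d\xi\]
by Parseval, and an explicit computation produces $\hat F(\xi)=\hat{\mathbbm{1}_B}(-\textstyle\sum_{r=1}^k\xi^r)\prod_{r=1}^k\hat{\mathbbm{1}_B}(\xi^r)$. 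I would then combine three standard ingredients: the pointwise bound $|1-\hat g(\varepsilon\lambda\xi)|\leq\min(1,\varepsilon\lambda|\xi|)$; the stationary-phase decay of $\hat\sigma_{\Delta,\lambda}$ (derived from the iterated decomposition $d\sigma_{\Delta,\lambda}=d\sigma^\Delta_\lambda\otimes\cdots\otimes d\sigma^{\Delta,y^1,\ldots,y^{k-1}}_\lambda$, whose slowest decay comes from the innermost $1$-sphere and gives exponent $\tfrac12$); and the Plancherel identity $\int|\hat{\mathbbm{1}_B}|^2=|B|\leq 1$ (supplying the $L^2$-mass of one ``free'' $\hat{\mathbbm{1}_B}$ factor in $|\hat F|$, the remaining ones bounded pointwise by $|B|\leq 1$). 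Splitting the $\xi$-integral at the scales $|\xi|\sim\lambda^{-1}$ and $|\xi|\sim(\varepsilon\lambda)^{-1}$ and applying the appropriate combinations in each regime yields $C\varepsilon^{1/2}$, and summing the $n$ telescoped terms from Step 1 preserves this rate.

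\emph{Main obstacle.} The principal technical difficulty is that $\hat\sigma_{\Delta,\lambda}$ is the Fourier transform of a measure on a proper submanifold of $\R^{k(k+1)}$ and so does not decay uniformly in all frequency directions. The non-decaying directions must be absorbed using the constraint $\sum_r\xi^r=0$ arising from the $x$-integration in $\hat F$, which aligns the Plancherel mass of $|\hat{\mathbbm{1}_B}|^2$ with those bad directions; verifying this alignment cleanly across all the nested spherical factors is where the bulk of the bookkeeping lies, although the worst-case exponent $\tfrac12$ is always attained by the innermost one-dimensional sphere and produces the $\varepsilon^{1/2}$ rate in the lemma.
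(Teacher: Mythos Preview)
Your Step~1 reduction is valid, though it differs from the paper's route: instead of telescoping $\prod_i(\sigma_{\Delta_i,\lambda}\ast g_{\varepsilon\lambda})-\prod_i\sigma_{\Delta_i,\lambda}$ directly, the paper writes $\mathcal N^\varepsilon_\lambda-\mathcal N^\vartheta_\lambda$ via the fundamental theorem of calculus and the heat equation as $\tfrac1{2\pi}\sum_v\int_\vartheta^\varepsilon(\cdots)\frac{dt}{t}$ with a common smoothing scale $t$ in every slot, and lets $\vartheta\to 0$ at the end. Your freezing of the $j\neq i$ variables to obtain a slice $B\subseteq[0,1]^{k_i+1}$ is legitimate, since the single-simplex bound you need is uniform in $B$ and the remaining integrations are against probability measures with $x_j^0$ confined to the unit cube whenever $B\neq\emptyset$.

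The genuine gap is in Step~2 when $k=k_i\ge 2$. After Parseval you face an integral over all of $\R^{k(k+1)}$, and the $L^2$ mass of a single factor $\widehat{\mathbbm 1_B}$ only controls $k{+}1$ of those dimensions. Bounding the other $\widehat{\mathbbm 1_B}$ factors by $|B|\le 1$ leaves the remaining $(k{-}1)(k{+}1)$ directions to be handled by decay of $\widehat{\sigma_{\Delta,\lambda}}$, but that decay is simply not strong enough: for instance, for fixed $\xi^1,\dots,\xi^{k-1}$ the slice $\xi^k\mapsto\widehat{\sigma_{\Delta,\lambda}}(\xi)$ is essentially an average of Fourier transforms of $1$-spheres in $\R^{k+1}$, which decay only like $\operatorname{dist}(\cdot,\text{line})^{-1/2}$ and are not in $L^2(\R^{k+1})$, so even the Cauchy--Schwarz version of your estimate diverges. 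Moreover, there is no ``constraint $\sum_r\xi^r=0$'' coming from the $x$-integration---that integration produces the extra \emph{bounded} factor $\widehat{\mathbbm 1_B}(-\sum_r\xi^r)$, not a restriction of the frequency domain, and there is no reason the bad directions of $\widehat{\sigma_{\Delta,\lambda}}$ should align with its argument. Your sketch therefore only covers $k=1$, which is precisely the setting of the reference you cite.

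The paper avoids this by never forming $\widehat{\sigma_{\Delta,\lambda}}$ at all. It expands $\sigma_{\Delta_v,\lambda}$ into its nested spherical factors, freezes the outer parameters $p^1,\dots,p^{k-1}$, and (after a Jensen step that squares the relevant convolution) applies Plancherel in a \emph{single} $(k{+}1)$-dimensional variable against only the innermost circle measure $\sigma^{\Delta,p^1,\dots,p^{k-1}}_\lambda$, for which the pointwise bound $|\widehat{\sigma}^{\Delta,p^1,\dots,p^{k-1}}(\xi)|\lesssim\operatorname{dist}(\xi,\operatorname{span}\{p^1,\dots,p^{k-1}\})^{-1/2}$ is available. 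Averaging $\operatorname{dist}^{-1}$ over the frozen $p$'s yields $\lesssim|\lambda\xi|^{-1}$ (an estimate from \cite{Kov20}); combined with $|\widehat{(\Delta g)_{t\lambda}}(\xi)|^2\lesssim|t\lambda\xi|^4e^{-2\pi|t\lambda\xi|^2}$ and one use of Plancherel for the isolated function $\mathcal F$, this gives an $O(t^{1/2})$ bound on the integrand, and $\int_0^\varepsilon t^{1/2}\,\frac{dt}{t}\lesssim\varepsilon^{1/2}$ finishes the proof.
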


Let us first show how to deduce Theorem \ref{mainthm} from these three lemmas. We  decompose
$$\mathcal{N}^0_{\lambda}(A) = \mathcal{N}^1_{\lambda}(A) + (\mathcal{N}^{\varepsilon}_{\lambda}(A) - \mathcal{N}^1_{\lambda}(A)) + (\mathcal{N}^0_{\lambda}(A) - \mathcal{N}^{\varepsilon}_{\lambda}(A)).$$
Let 
$C_1$-$C_3$ be the constants that appear in Lemmas \ref{strlemma}-\ref{unilemma}, respectively. We may assume $C_1\in (0,1], C_2,C_3\in [1,\infty)$.  Let $\delta \in (0,1/2]$,  $\kappa:=(k_1+1)\cdots (k_n+1)$, and $\varrho := \sum_{i=1}^n k_i(k_i+2)$. 
Take  
$$\varepsilon := (C_1 \delta^{\kappa})^2(3C_3)^{-2}, \quad J := \lfloor ( 3C_2C_1^{-1}\varepsilon^{-\varrho-1}\delta^{-\kappa} )^{2^n} \rfloor+1. $$
Note that 
$J \lesssim  \delta^{-\kappa 2^n(2\varrho+3)},$
which gives
$$2^{-J} \geq 2^{-C\delta^{-\kappa 2^n(2\varrho+3)}} = e^{-C'\delta^{-\kappa 2^n(2\varrho+3)}}$$
for some constants $C$ and $C'$.

By \eqref{lemma2}, there exists $1\leq j \leq J$ such that for each $\lambda \in ( 2^{-j}, 2^{-j+1}]$, 
$$\big|\mathcal{N}^{\varepsilon}_{\lambda}(A)-\mathcal{N}^{1}_{\lambda}(A)\big| \leq C_2 J^{-2^{-n}} \varepsilon^{-\varrho-1} \leq \frac{C_1}{3} \delta^{\kappa}.$$
Combined with \eqref{lemma1} and \eqref{lemma3}, we obtain the desired estimate
\[\mathcal{N}^0_{\lambda}(A) \geq \mathcal{N}^{1}_{\lambda}(A) - \big|\mathcal{N}^{\varepsilon}_{\lambda}(A)-\mathcal{N}^{1}_{\lambda}(A)\big|- \big|\mathcal{N}^{0}_{\lambda}(A)-\mathcal{N}^{\varepsilon}_{\lambda}(A)\big|\]
\[\geq C_1 \delta^{\kappa} - \frac{C_1}{3} \delta^{\kappa} - \frac{C_1}{3} \delta^{\kappa} =  \frac{C_1}{3} \delta^{\kappa} > 0.\]

\section{The structured part: Proof of lemma \ref{strlemma}}
Let $\delta \in (0,1/2]$ and  $A\subseteq [0,1]^{k_1+1}\times \cdots \times [0,1]^{k_n+1}$ with $|A|\geq \delta$. 
Let $m$ be the unique positive integer such that $\lambda \in  (2^{-m}, 2^{-m+1}]$. For each  $1\leq i\leq n$,     let $\mathcal{Q}_m^{k_i}$ be a collection of all dyadic cubes   in $[0,1]^{k_i+1}$ with side length $2^{-m}$. We write $\mathcal{Q}_m := \mathcal{Q}_m^{k_1}\times \cdots \times \mathcal{Q}_m^{k_n}$.

We  partition the domain of integration of each $x_i^{r_i}$ into congruent cubes of side length $2^{-m}$ and restrict the domain of integration so that for each $i$, all $x_i^{r_i}$, $1\leq r_i \leq k_i$,   lie in the same  cube from  $\mathcal{Q}_m$.   This yields
\[
\mathcal{N}^1_{\lambda}(A) \geq 2^{-D}  \sum_{(Q_1, \ldots , Q_n) \in \mathcal{Q}_m} \fint_{Q_1^{k_1+1}\times \cdots \times  Q_n^{k_n+1}}    \Big( \prod_{h\in \H} \mathbbm{1}_A(\Pi_h x) \Big )    \prod_{i=1}^n  \sigma_{\Delta_i,\lambda}*g_\lambda((x_i^r-x_i^0)_{r=1}^{k_i}) \,  dx\]
 where $D = m (K+1) \cdot (K+1)$ and $Q^{k_i}_i$ denotes the $k_i$-fold Cartesian product of $Q_i$.

Since the measure $\sigma_{\Delta_i,\lambda }$ is  supported in the ball $B(0, \lambda C_{\Delta_i})$ in $\R^{k_i(k_i+1)}$  of radius $\lambda C_{\Delta_i}$ centered at  the origin,  where $C_{\Delta_i}$  is   the diameter of  $\Delta_i$, 
for each $1\leq i \leq n$ and $y\in B(0,\lambda R)$,  
\[\sigma_{\Delta_i,\lambda} * g_\lambda (y) = \lambda^{-k_i(k_i+1)} \int_{\R^{k_i(k_i+1)}}e^{-\pi |\lambda^{-1} (y-u)|^2}  d\sigma_{\Delta_i,\lambda}(u)   \]
\[\ge \lambda^{-k_i(k_i+1)}  e^{-\pi (C_{\Delta_i}+R)^2}  \int_{\R^{k_i(k_i+1)}} d\sigma_{\Delta_i,\lambda}(u) =  \lambda^{-k_i(k_i+1)}  e^{-\pi (C_{\Delta_i}+R)^2} 
\]
The last equality follows after integrating in $u^{k}, u^{k-1}$, all the way to $u^1$, where $u=(u^1,\ldots, u^k)$, using that the spherical measures are normalized.
Therefore, with $R^2=k_i(k_i+1)$  we obtain  
$$\sigma_{\Delta_i,\lambda}  \ast g_{\lambda} \gtrsim  \lambda^{-k_i(k_i+1)}\mathbbm{1}_{[-\lambda,\lambda]^{k_i(k_i+1)}} \gtrsim  2^{mk_i(k_i+1)} \mathbbm{1}_{[-2^{-m},2^{-m}]^{k_i(k_i+1)}}.$$

Estimating $\sigma_{\Delta_i,\lambda} * g_\lambda$ from below for each $i$ we obtain
\[
\mathcal{N}^1_{\lambda}(A) \gtrsim  2^{-m(k+n)} \sum_{(Q_1, \ldots , Q_n) \in \mathcal{Q}_m} \fint_{Q_1^{k_1+1}\times \cdots \times  Q_n^{k_n+1}} \Big(\prod_{h\in \mathcal{H}}\mathbbm{1}_{A}(\Pi_h x)  \Big ) 
\]
\[ \times  \prod_{i=1}^n \mathbbm{1}_{[-2^{-m},2^{-m}]^{k_i(k_i+1)}}((x_i^r-x_i^0)_{r=1}^{k_i})\, dx.    \]
Since for $1\leq r_i\leq k_i$ both $x_i^{r_i}$ and $x_i^0$ belong to the same cube $Q_i$, this gives
\[
\mathcal{N}^1_{\lambda}(A) \gtrsim  2^{-m(k+n)} \sum_{(Q_1, \ldots , Q_n) \in \mathcal{Q}_m} \fint_{Q_1^{k_1+1}\times \cdots \times  Q_n^{k_n+1}} \prod_{h\in \mathcal{H}}\mathbbm{1}_{A}(\Pi_h x) \, dx.
\]

To complete the proof of the estimate \eqref{lemma1}, we claim that 
\begin{equation}
\label{lowerbd}
 \fint_{Q_1^{k_1+1}\times \cdots \times  Q_n^{k_n+1}}  \prod_{h\in \mathcal{H}}\mathbbm{1}_{A}(\Pi_h x) \,  dx \geq \bigg( \fint_{Q_1\times \cdots \times Q_n} \mathbbm{1}_{A} \bigg)^{\kappa} 
\end{equation}
where $\kappa =(k_1+1)\cdots (k_n+1)$.  
This claim then implies 
\[\mathcal{N}^1_{\lambda}(A) \gtrsim  2^{-m(k+n)} \sum_{(Q_1, \ldots, Q_n) \in \mathcal{Q}_m} \bigg( \fint_{Q_1\times\cdots\times Q_n} \mathbbm{1}_{A} \bigg)^{\kappa} \]
\[\geq \bigg( 2^{-m(k+n)} \sum_{(Q_1, \ldots , Q_n) \in \mathcal{Q}_m} \fint_{Q_1\times\cdots\times Q_n}  \mathbbm{1}_{A} \bigg)^{\kappa} = \bigg( \int_{[0,1]^{k+n}} \mathbbm{1}_{A} \bigg)^{\kappa} \geq \delta^{\kappa}.
\]
The second inequality follows by applying Jensen's inequality for the normalized counting measure over the set of cardinality $2^{m(k+n)}$.
This finishes the proof the lemma, up to verification of the bound  \eqref{lowerbd}.

The estimate \eqref{lowerbd} can be proven by induction on $n$. We rewrite
\[ \fint_{Q_1^{k_1+1}\times \cdots \times Q_n^{k_n+1}}  \prod_{h\in \mathcal{H}}\mathbbm{1}_{A}(\Pi_h x)  \, dx = \fint_{Q_1^{k_1+1}\times \cdots \times Q_n^{k_n+1}}  \prod_{a=0}^{k_n} \prod_{h\in \H: h(n)=a} \mathbbm{1}_A (\Pi_h x)  \, dx \]
\begin{equation}
    \label{unifbd}
    = \fint_{Q_1^{k_1+1}\times \cdots \times Q_{n-1}^{k_{n-1}+1}} \Big( \fint_{Q_n} \prod_{h\in \H: h(n)=0} \mathbbm{1}_A (\Pi_h x)dx_n^0 \Big )^{k_n+1} dx' 
\end{equation}
where $x' = (x^0_1,\ldots, x^{k_1}_1, x_2^0,\ldots x_2^{k_2}, \ldots, x_{n-1}^0,\ldots, x_{n-1}^{k_{n-1}})$.
From here we see that the claim  holds for $n=1$ since in this case,   \eqref{lowerbd} is an equality. 
For the induction step we 
use Fubini and Jensen's inequality for the integral over $x'$, which bounds \eqref{unifbd} from below by
\[ \Big( \fint_{Q_n} \fint_{Q_1^{k_1+1}\times \cdots \times Q_{n-1}^{k_{n-1}+1}} \prod_{h\in \H: h(n)=0} \mathbbm{1}_A (\Pi_h x)\, dx'\, dx_n^0 \Big )^{k_n+1}. \]
By the induction hypothesis applied to the set
\[\{ y \in \mathbb{R}^{k+n-k_n-1} : (y,x_n^0) \in A \}\]
for a fixed $x_n^0$, we estimate the last expression from below by
\[ \bigg( \fint_{Q_n} \bigg( \fint_{Q_1\times \cdots \times Q_{n-1}} \mathbbm{1}_{A}(y)dy \bigg)^{(k_1+1) \cdots (k_{n-1}+1)}dx_n^0 \bigg)^{(k_n+1)}.\]
Another application of Jensen's inequality, this time for the integral over $x_n^0$, gives \eqref{lowerbd}. This completes the proof of Lemma \ref{errlemma}.

\section{The error part: Proof of lemma \ref{errlemma}}
The key  ingredient in the proof of bounds for the error part are cancellation estimates for forms similar to multilinear singular integrals associates with $n$-regular $n$-partite hypergraphs. These are the content of     Lemma \ref{lemma:ind}. 

For $1\leq v \leq n$, parameters  $\lambda,a,b>0$,  and  $A\subseteq [0,1]^{k_1+1}\times \cdots \times [0,1]^{k_n+1}$ we introduce 
\[N(v, \lambda, a,b, A) := \int_{a}^{b} \int_{\R^{(K+1) \cdot (K+1)}  }  \Big( \prod_{h \in \H} \mathbbm{1}_{A}(\Pi_h x) \Big )  \]
\begin{equation}\label{Nform}
 \times   {\sigma}_{\Delta_{v},\lambda} \ast {(\Delta g)}_{t\lambda} ((x_v^r-x_v^0)_{r=1}^{k_v})   
 \prod_{i=1, i\neq v}^n {\sigma}_{\Delta_i,\lambda} \ast {g}_{t\lambda} ( (x_i^r-x_i^0)_{r=1}^{k_i}) \, dx \, \frac{dt}{t}.
\end{equation}
 Here, $\Delta g$ is the Laplacian of $g$ in $\R^{{k_v}(k_{v}+1)}$ dimensions.  
If we  further write  
\begin{equation}
    \label{laplaciansplit}
\Delta g ((x_v^r-x_v^0)_{r=1}^{k_v})=\sum_{w=1}^{k_{v}} \Delta g (x_{v}^{w}-x_{v}^0)  \,  g((x_v^r-x_v^0)_{r=1,r\neq w}^{k_v}),
\end{equation}
  and expand out the convolutions, we obtain 
 \[N(v, \lambda, a,b, A) =  \sum_{w=1}^{k_{v}}  \int_{a}^{b} \int_{\R^{(K+1) \cdot (K+1) + k_{v}(k_v+1)} }  \Big( \prod_{h \in \H} \mathbbm{1}_{A}(\Pi_h x) \Big )   \]
\[ \times (\Delta g)_{t\lambda} (x_{v}^{w} - x_{v}^0 - p_{v}^{w}) \,  g_{t\lambda} ((x_{v}^r - x_{v}^0 - p_{v}^r)_{r=1,r\neq w}^{k_v})  \]
\begin{equation}
    \label{Nform-2}
    \times \Big( \prod_{r=1}^{k_v} d\sigma_{\lambda}^{\Delta_{v},p_{v}^{1},\ldots p_{v}^{k_v-r}}(p_v^{k_v+1-r}) \Big ) \prod_{i=1, i\neq v}^n {\sigma}_{\Delta_i,\lambda} \ast {g}_{t\lambda} ((x_i^r-x_i^0)_{r=1}^{k_i}) \, dx \, \frac{dt}{t}. 
\end{equation}

Using the fundamental theorem of calculus in $t$, the Leibniz rule and the heat equation
 \begin{equation}
 \label{heateqn}
 (\Delta g)_{t\lambda} = 2\pi t \partial_t(g_{t\lambda})
 \end{equation}
we thus obtain the identity 
\[ \sum_{j=1}^J (\mathcal{N}^{1}_{\lambda_j}(A)-\mathcal{N}^{\varepsilon}_{\lambda_j}(A))  = \frac{1}{2\pi}\sum_{v=1}^n \sum_{j=1}^J N(v, \lambda_j, \varepsilon,1,A).\]
Due to symmetry with respect to the simplices and  the definitions of the measures $\sigma_{\Delta_i,\lambda}$,  it suffices to prove the corresponding bound from Lemma \ref{lemma2} for $v=1$ and $w = k_1$ in \eqref{Nform-2}. 

Now, for $j \in \mathbb{N}$, $t \in ( 0,\infty )$ and $s \in [ 2^{-j-5}t, 2^{-j-4}t ]$ let
$$r_j(s,t) := \sqrt{t^2\lambda_j^2-s^2},\quad c_j(s,t) := \frac{t^2\lambda_j^2}{sr_j(s,t)}.$$
We can see that $s \sim 2^{-j}t \sim t\lambda_j \sim r_j(s,t)$ and  thus $c_j(s,t) \sim 1$. Also, for a $d$-dimensional Gaussian $g$  we have 
\begin{equation}
    \label{gkh}
    (\Delta g)_{t\lambda_j} = c_j(s,t) \sum_{m =1}^{d} (\partial_m g)_{r_j(s,t)} \ast (\partial_m g)_s.
\end{equation}
We will also split and estimate
\begin{equation}
\label{fest}    
\prod_{h\in \H} \mathbbm{1}_A (\Pi_h x) = \prod_{a=0}^{k_1} \prod_{h\in \H: h(1)=a}  \mathbbm{1}_A (\Pi_h x) \leq \prod_{h\in \H: h(1)=k_1}  \mathbbm{1}_A (\Pi_h x)
\end{equation}

We start by multiplying the integrand \eqref{Nform-2} by
\[(\log 2)^{-1}\int_{2^{-j-5}}^{2^{-j-4}}\frac{ds}{s} =1.\]
Applying the triangle inequality and using   \eqref{gkh} and \eqref{fest}, we estimate $\sum_{j=1}^J|N(1,\lambda_j, \varepsilon,1,A) |$  up to an absolute constant by  
\[\sum_{j=1}^J \sum_{m=1}^{k_1+1}\int_\varepsilon^1 \int_{2^{-j-5}t}^{2^{-j-4}t} \int_{\R^{D}}  \mathbbm{1}_{[0,1]^{k+n}}(x_1^0,\ldots, x_n^0) \,   \Big |  \int_{\R^{k_1+1}} \Big( \prod_{h\in \H: h(1)=k_1}  \mathbbm{1}_A (\Pi_h x)\Big)     \]
\[ (\partial_m g)_s (x_1^{k_1}-q)\,  dx_1^{k_1} \Big |\times (\sigma_{\lambda_j}^{\Delta_1,p_1^1,\ldots,p_1^{k_1-1}}  \ast |\partial_m g|_{r_j(s,t)})(x_1^0-q)\, {g}_{t\lambda_j}((x_1^{r}-x_1^0-p_1^{r})_{r=1}^{k_1-1})   \]
\begin{equation*}
    \times 
    \Big( \prod_{r=1}^{k_v} d\sigma_{\lambda}^{\Delta_{v},p_{v}^{1},\ldots p_{v}^{k_v-r}}(p_v^{k_v+1-r}) \Big )\, d((x_1^r)_{r=0}^{k_1-1}) \,
    dq 
\end{equation*}
\begin{equation}\times  \Big ( \prod_{i=2}^n   \sigma_{\Delta_i,\lambda_j} * {g}_{t\lambda_j}((x_i^r-x_i^0)_{r=1}^{k_i}) d((x_i^r)_{r=0}^{k_i}) 
\Big )\,  \frac{ds}{s} \, \frac{dt}{t}   
\label{gausstail}
\end{equation}
where now $D=(K+1) \cdot (K+1) + (k_1-1)(k_1+1)$.

If $\mu$ is a probability measure on $\R^d$, $d\geq 2$,  whose support is contained in a sphere in $\R^d$ and whose radius depends only on the simplices $\Delta_i$,  and  $\phi$  a Schwartz function on $\R^{d}$, for $t\in [\varepsilon,1]$ we can estimate  
\begin{equation}
    \label{estmeasure}
    (\mu_{t^{-1}} \ast \phi)(y) \lesssim  \int_{\mathbb{R}^{d}} ( 1+ | y-ut^{-1} | )^{-d-1}d\mu(u) \lesssim \varepsilon^{-d-1}(1+|y|)^{-d-1}
\end{equation}
with the implicit constant depending on $\phi$.
The second inequality follows from 
\[1+|y| \leq  1+|y-ut^{-1}| + |u|t^{-1}  \lesssim \varepsilon^{-1}( 1+|y-ut^{-1}|).\]
We can rescale   inequality \eqref{estmeasure} by $t\lambda_j s^{-1} \sim 1$ in order to get
$$(\mu_{\lambda_js^{-1}} \ast \phi_{t\lambda_js^{-1}})(y) \lesssim  \varepsilon^{-d-1}(1+|y|)^{-d-1}.$$
Analogously we obtain
$$(\mu_{\lambda_js^{-1}} \ast \phi_{r_j(s,t)s^{-1}})(y) \lesssim \varepsilon^{-d-1}(1+|y|)^{-d-1}.$$
Dominating the right-hand side by a superposition of Gaussians in $\R^d$
\[\int_1^\infty g_{\beta}(y) \, \frac{d\beta}{\beta^2} \sim (1+|y|)^{-d-1}   \]
and rescaling 
 by $s$ gives
 \begin{equation}
     \label{guassianineq}
     (\mu_{\lambda_j} \ast \phi_{t\lambda_j})(y) \lesssim  \varepsilon^{-d-1}\int_1^{\infty} {g}_{\beta s}(y)\, \frac{d\beta}{\beta^2}, \quad (\mu_{\lambda_j} \ast \phi_{r_j(s,t)})(y) \lesssim \varepsilon^{-d-1}\int_1^{\infty} {g}_{\beta s}(y)\, \frac{d\beta}{\beta^2}.
 \end{equation}

Now we apply the second inequality in \eqref{guassianineq} to the convolution 
\[\sigma_{\lambda_j}^{\Delta_1,p_1^1,\ldots,p_1^{k_1-1}} \ast |\partial_m g|_{r_j(s,t)}(x_1^0-q)\] in the expression \eqref{gausstail}.  After that we can recognize another convolution, this time after integrating in the variable $p_1^{k_1-1}$, i.e. 
\[\sigma_{\lambda_j}^{\Delta_1,p_{1}^{1},\ldots p_1^{k_1-2}}*g_{t\lambda_j}(x_1^{k_1-1}-x_1^0). \]
We apply the first inequality in \eqref{guassianineq} to this convolution. 
We continue the same procedure in the variables $p_{1}^{k_1-2},\ldots, p_1^1$  until we have estimated all convolutions in these variables one by one. Then we also use  \eqref{guassianineq} on the remaining  convolutions $\sigma_{\Delta_i,\lambda_j}*g_{t\lambda_j}$.  If we denote $\beta=(\beta_1,\ldots, \beta_n)$ and  $\beta_i=(\beta_{i}^0,  \beta_{i}^1,\ldots,\beta_{i}^{k_i})\in (1,\infty)^{k_i+1}$ for $1\leq i \leq n$,
we obtain an estimate for  \eqref{gausstail} by 
\[ \varepsilon^{-\varrho}\, \sum_{j=1}^J \sum_{m=1}^{k_1+1} \int_{(1,\infty)^{k+n }} \int_{\varepsilon}^1 \int_{2^{-j-5}t}^{2^{-j-4}t} \int_{\mathbb{R}^{(K+1)\cdot (K+1)}}  \mathbbm{1}_{[0,1]^{k+n}}(x_1^0,\ldots, x_n^0)  \]
\begin{equation}
\label{form:tobound}
\times \Big |  \int_{\R^{k_1+1}}    \Big( \prod_{h\in \H: h(1)=k_1}  \mathbbm{1}_A (\Pi_h x)\Big) (\partial_m g)_{s} (x_1^{k_1}-q) dx_1^{k_1} \Big |  \,  d\mu^{(1,k_1)}_{K,g,\beta}\, \frac{ds}{s} \, \frac{dt}{t} \,   \mathcal{D}(\beta)^{-2}  \,d\beta 
\end{equation}
where $\varrho$ is  as in Lemma \ref{errlemma} and $\mathcal{D}(\beta)$ is just the product of all $\beta_i^r$  over $1\leq i\leq n$ and $0\leq r\leq k_i$. 
Here, for $L=(l_1,\ldots,l_n)$, 
 a tuple $\alpha=(\alpha_1,\ldots,\alpha_n)$, $\alpha_i=(\alpha_{i}^0,\alpha_{i}^1,\ldots,\alpha_{i}^{l_i})\in (1,\infty)^{l_i+1}$,   a Schwartz function $\varphi$, $1\leq v\leq n$,  and $1\leq w \leq l_v$ we set
\[d{\mu}^{(v,w)}_{L,\varphi,\alpha} = \varphi_{s\alpha_{v}^0} (x_{v}^0-q) \Big( \prod_{r=1,r\neq w}^{l_{v}} g_{s\alpha_{v}^r}(x_{v}^{r}-x_{v}^0)  dx_{v}^{r} \Big )   \Big( \prod_{i=1, i\neq v}^n g_{s\alpha_{i}} ((x_i^r-x_i^0)_{r=1}^{l_i}) dx_i 
\Big ) \, dq,\]
and we  set $x_i=(x_i^0,\ldots,x_i^{l_i})$. 
Note that the integrand in \eqref{form:tobound} is constant in $\beta_{1}^{k_1}$ and $\beta_{i}^0$, $i\neq v$,  
but we added 
 an integral in these variables that  equals to one for uniform notation. 
 
 Now,  for $1\leq v\leq n$, $1\leq m \leq k_{v}+1$,   any tuple $\alpha=(\alpha_1,\ldots, \alpha_n)$ as above,  any real numbers $a,b$ with $b/a\geq 2$, any $L=(l_1,\ldots, l_n)$ with $1\leq l_i \leq k_i$, and  $1\leq w \leq l_v$,    we   define
 \[\widetilde{\Theta}^{(v,w,m)}_{L,\varphi,a,b, \alpha}(A) :=   \int_{a}^{b} \int_{\mathbb{R}^{(K+1)\cdot (L+1)}}    \mathbbm{1}_{[0,1]^{k+n}}(x_1^0,\ldots, x_n^0)     \]
 \begin{equation}
 \label{thetatilde}\times  \Big| \int_{\R^{k_{v}+1}} \Big( \prod_{h\in \H_L: h(v) = w} \mathbbm{1}_A(\Pi_h x) \Big ) (\partial_{m} g)_{s\alpha_{v}^w}(x_{v}^{w} -q ) dx_{v}^{w} \Big | \, d\mu^{(v,w)}_{L,\varphi,\alpha} \, \frac{ds}{s}.
 \end{equation}
Then, after summing in $j$, \eqref{form:tobound} can be recognized as 
 \begin{equation}
 \label{mainbound}
 \varepsilon^{-\varrho} \,\sum_{m=1}^{k_1+1} \int_{(1,\infty)^{k+n}} \int_{\varepsilon}^1 \widetilde{\Theta}_{K,g,2^{-J-5}t, 2^{-5}t,\widetilde{\beta}}^{(1,k_1,m)}(A)\, \frac{dt}{t}\, \mathcal{D}(\beta)^{-2} \,d\beta 
 \end{equation}
 where $\widetilde{\beta}_{1}^{k_1}=1$, $\widetilde{\beta}_{i}^0=1$ for all $i\neq v$, 
 and $\widetilde{\beta}$ agrees with $\beta$ in all other coordinates. 
Thus it suffices to bound the form inside the inner integral for each fixed $t$ and $\beta_{i}^{k_i}$.

In order to do that, for the same range of parameters $v,L,a,b,\alpha$ as in the definition of \eqref{thetatilde} we define yet another form
 \[\Theta^{(v)}_{L,a,b,\alpha}(A) :=  - \int_{a}^{b} \int_{\mathbb{R}^{(K+1)\cdot (L+1)}}      \Big( \prod_{h\in \H_L} \mathbbm{1}_A(\Pi_h x) \Big )  \]
\begin{equation}
\times (\Delta g)_{s\alpha_{v}} ((x_{v}^{r}-x_{v}^{0})_{r=1}^{l_v}  ) \prod_{i=1, i\neq v}^n g_{s\alpha_{i}} ((x_{i}^{r}-x_{i}^{0})_{r=1}^{l_i} )\, dx\, \frac{ds}{s}.  
\label{theta}
\end{equation}
Note that as opposed to \eqref{form:tobound}, this form is not necessarily non-negative.  
The bound for \eqref{form:tobound} will  follow from the following crucial lemma, which establishes  a cancellation estimate for both \eqref{thetatilde} and \eqref{theta}.  In the  proof we inductively simplify the kernels of the singular forms using repeated applications of Cauchy-Schwarz,  partial integration, and positivity arguments.

 \begin{lemma} \label{lemma:ind} 
For any $L,\varphi,a,b,\alpha,v,w,m$ as above, 
\begin{equation} \label{thetabd}
    |\Theta^{(v)}_{L,a,b,\alpha}(A)|, \widetilde{\Theta}^{(v,w,m)}_{L,\varphi,a,b,\alpha}(A) \lesssim  ( \log(b/a))^{1-2^{-n}}.
\end{equation}
 \end{lemma}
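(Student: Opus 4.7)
The plan is to prove both estimates simultaneously by induction on $n$, using Cauchy-Schwarz to reduce an $n$-simplex form to an $(n-1)$-simplex form at each step. In the base case $n=1$, I would handle $\Theta^{(1)}_{L,a,b,\alpha}(A)$ via the heat equation \eqref{heateqn}: writing $(\Delta g)_{s\alpha_1} = 2\pi s\,\partial_s g_{s\alpha_1}$ identifies the integrand with $-2\pi$ times the $s$-derivative of
\[F(s):=\int \Big(\prod_{h\in \H_L}\mathbbm{1}_A(\Pi_h x)\Big)\, g_{s\alpha_1}((x_1^r-x_1^0)_{r=1}^{l_1})\,dx,\]
so that the fundamental theorem of calculus collapses the $s$-integral to $-2\pi(F(b)-F(a))$; the bound $|F(s)|\le 1$ via $0\le \mathbbm{1}_A\le 1$ and $\int g_{s\alpha_1}=1$ gives $|\Theta^{(1)}|\lesssim 1\lesssim (\log(b/a))^{1/2}$ since $\log(b/a)\ge \log 2$. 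For $\widetilde{\Theta}^{(1,w,m)}$, I would first integrate out the Gaussian factors $g_{s\alpha_1^r}$ for $r\ne w$ (they are probability densities) and estimate $\|\mathbbm{1}_{[0,1]}\ast \varphi_{s\alpha_1^0}\|_{L^2_q}\lesssim 1$ via Young's inequality, reducing to $\int_a^b \|\mathbbm{1}_A\ast (\partial_m g)_{s\alpha_1^w}\|_{L^2_q}\,\frac{ds}{s}$; then Cauchy-Schwarz in $ds/s$ extracts $(\log(b/a))^{1/2}$ and leaves the $L^2$ square function
\[\Big(\int_a^b \|\mathbbm{1}_A\ast (\partial_m g)_{s\alpha_1^w}\|_{L^2_q}^2\,\frac{ds}{s}\Big)^{1/2},\]
which is $O(1)$ by a standard Littlewood-Paley estimate based on $\widehat{\partial_m g}(0)=0$.

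\textbf{Inductive step ($n-1\Rightarrow n$).} Assuming the bounds for $n-1$, I would choose an auxiliary simplex index $u\ne v$ (available since $n\ge 2$) and group the product over $h\in \H_L$ according to the value of $h(u)$:
\[\prod_{h\in \H_L}\mathbbm{1}_A(\Pi_h x)=\prod_{a=0}^{l_u}F_a(x',x_u^a),\quad F_a(x',y):=\prod_{h\in \H_L:\,h(u)=a}\mathbbm{1}_A(\Pi_h(x',y)),\]
with $x'=(x_i)_{i\ne u}$. Integrating out $x_u^1,\dots,x_u^{l_u}$ against $\prod_{r=1}^{l_u} g_{s\alpha_u^r}(x_u^r-x_u^0)$ replaces $F_r$ for $r\ge 1$ by the convolution $(F_r\ast g_{s\alpha_u^r})(x',x_u^0)$. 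Applying Cauchy-Schwarz in the joint variable $(x_u^0,x',s)$ with the weighted measure $dx\,ds/s$ to split $F_0$ off from the product of smoothed factors, and squaring, yields
\[|\Theta^{(v)}|^2,\ \widetilde{\Theta}^{(v,w,m)\,2}\lesssim T_1\cdot T_2,\]
where $T_1\lesssim \log(b/a)$ is the trivial bound on $\int F_0^2\,dx\,ds/s$ (using $F_0^2=F_0$, $\|F_0\|_\infty\le 1$, localization of $A$ to $[0,1]^{k+n}$, the unit mass of the remaining Gaussians, and $\|\Delta g\|_{L^1}<\infty$ in the $\Theta$-case), while after expanding the squared product of convolutions and recognizing the self-convolutions $g_{s\alpha_u^r}\ast g_{s\alpha_u^r}$ as Gaussians at scale $\sqrt{2}\,s\alpha_u^r$, $T_2$ is again a $\Theta$- or $\widetilde{\Theta}$-type form but with the simplex $\Delta_u$ absorbed into Gaussian weights, i.e., with only $n-1$ simplices. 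The inductive hypothesis yields $T_2\lesssim (\log(b/a))^{1-2^{-(n-1)}}$, and hence
\[|\Theta^{(v)}|,\ \widetilde{\Theta}^{(v,w,m)}\lesssim (T_1 T_2)^{1/2}\lesssim (\log(b/a))^{(1+(1-2^{-(n-1)}))/2}=(\log(b/a))^{1-2^{-n}},\]
completing the induction.

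\textbf{Main obstacle.} The principal difficulty I anticipate is the rigorous verification that $T_2$ genuinely fits the $\Theta$- or $\widetilde{\Theta}$-template with $n-1$ simplices after Cauchy-Schwarz. This requires careful book-keeping of the Gaussian scales produced by self-convolutions, checking that the reduced hypergraph $\H_{L'}$ with $L'=(l_1,\dots,l_{u-1},l_{u+1},\dots,l_n)$ matches the squared product of convolved factors, and a correct update of the auxiliary Schwartz function $\varphi$ in the $\widetilde{\Theta}$-case. A secondary difficulty is the interaction of the absolute value in $\widetilde{\Theta}$ and of the Laplacian in $\Theta^{(v)}$ with Cauchy-Schwarz: when the auxiliary variable is disjoint from the one carrying $\partial_m g$ or the Laplacian, these survive untouched, but otherwise one must first perform a partial integration (moving the derivative onto a Schwartz factor via \eqref{laplaciansplit}) before squaring, so that the resulting $T_2$ remains within the admissible class of forms.
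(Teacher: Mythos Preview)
Your base case is fine, but the inductive step does not go through as described. The core problem is the claim that after Cauchy--Schwarz in the auxiliary block $u\ne v$, the term $T_2$ is a $\Theta$- or $\widetilde{\Theta}$-type form ``with $n-1$ simplices.'' It is not. The ambient number $n$ is fixed: $\mathbbm{1}_A$ lives on $\R^{k_1+1}\times\cdots\times\R^{k_n+1}$, and every function in the expanded square still depends on the $u$-th coordinate. Concretely, in the $\Theta^{(v)}$ case you must put $|(\Delta g)_{s\alpha_v}|$ into the Cauchy--Schwarz measure to make $T_1$ finite; then the kernel in the $v$-th slot of $T_2$ is $|(\Delta g)_{s\alpha_v}|$, not $(\Delta g)_{s\alpha_v}$, so $T_2$ is neither a $\Theta^{(v)}$- nor a $\widetilde{\Theta}^{(v,\cdot,\cdot)}$-form and there is no cancellation left to exploit in the $v$-th variable. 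Meanwhile, integrating out $x_u^0$ after squaring $\prod_{r\ge 1}(F_r*g_{s\alpha_u^r})$ produces a joint Gaussian coupling among the $2l_u$ doubled $u$-variables that is not the simple product of self-convolutions you sketch unless $l_u=1$; even then the $u$-th factor has not been removed but replaced by a two-variable Gaussian block, so there is no ``$(n-1)$-simplex'' form for an inductive hypothesis to apply to.

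The paper's argument avoids this trap by applying Cauchy--Schwarz in the $v$-th block (the one carrying the derivative), not in an auxiliary block. One first rewrites $\Theta^{(v)}$ via the identity $-\tfrac12(\Delta g)_{\sqrt2 s}=\sum_m(\partial_m g)_s*(\partial_m g)_s$ as a sum of $\widetilde{\Theta}^{(v,w,m)}$; Cauchy--Schwarz on $\widetilde{\Theta}^{(v,w,m)}$ in all variables except $x_v^w$ then squares the inner $\partial_m g$-integral, and after integrating the leftover $x_v^r$'s and using the same identity once more one recovers a genuine $\Theta^{(v)}_{\widetilde L}$ with $\widetilde l_v=1$ and $\widetilde\alpha_v^0=\widetilde\alpha_v^1$. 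The induction is \emph{downward} on the number $\tau$ of indices $i$ for which $l_i=1$ and $\alpha_i^0=\alpha_i^1$ already hold (not on $n$), and the crucial extra ingredient is the telescoping identity $\sum_{z=1}^n\Theta^{(z)}_{L,a,b,\alpha}=2\pi(\Xi_{L,a,\alpha}-\Xi_{L,b,\alpha})=O(1)$ together with the nonnegativity of $\Theta^{(z)}$ for $z\le\tau$; these let one trade $\Theta^{(v)}$ for a $\Theta^{(z)}$ with $z>\tau$ and so advance the induction. Your scheme would need both the ``Cauchy--Schwarz at the singular index'' step and the positivity/telescoping trick to close; performing Cauchy--Schwarz only at a nonsingular index cannot reduce the complexity of the form.
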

Applying Lemma \ref{lemma:ind} specified to  
$(L,\varphi,a,b,\alpha,v,w,m) = (K,g,2^{-J-5}t,2^{-5}t,\widetilde{\beta},1,k_1,m)$ gives the desired estimate for the form \eqref{form:tobound}. Indeed, we obtain a bound for \eqref{mainbound} by
\[ J^{1-2^{-n}} \varepsilon^{-\varrho} \sum_{m=1}^{k_1+1} \int_{(1,\infty)^{k+n}} \int_{\varepsilon}^1 \, \frac{dt}{t} \, \mathcal{D}(\beta)^{-2}  \,d\beta   \]
 In the end it remains to integrate in $t$ and $\beta$ and sum in $m$, which yields the conclusion of Lemma \ref{errlemma}. 
 
\begin{proof}[Proof of Lemma \ref{lemma:ind}] This lemma   will be proven by downward induction on a parameter $0\leq \tau \leq n$.
Let $P(\tau)$ be the statement that the estimates  
\begin{equation*} 
    |\Theta^{(v)}_{L,a,b,\alpha}(A)|, \widetilde{\Theta}^{(v,w,m)}_{L,\varphi,a,b,\alpha}(A) \lesssim ( \log(b/a))^{1-2^{-n+\tau}}
\end{equation*}
 hold for all tuples $L$ satisfying $l_i=1$ and $\alpha_{i}^0=\alpha_{i}^1$ for $1\leq i \leq \tau$. The estimate on \eqref{thetabd}  will follow from $P(0)$. The base case is $P(n)$.   Pick $0\leq \tau \leq n$. We may inductively assume that $P(\tau')$ holds for all $\tau<\tau'\leq n$.

   Let $L$ be a tuple  satisfying $l_i=1$ for $1\leq i \leq \tau$.  Let $1\leq v \leq n$.    First we reduce the bound for \eqref{theta} to the bound for \eqref{thetatilde}. 
Applying the convolution identity  
\begin{equation}-\frac{1}{2} (\Delta g)_{\sqrt{2} s}(x_v^{w}-x_v^{0}) = \sum_{m=1}^{k_{v}+1} \int_{\mathbb{R}^{k_v+1}} (\partial_m g)_s (x_v^{0}-q) (\partial_m g)_s (x_v^{w}-q)\, dq,  \label{khhconv}\end{equation}
rewriting the integral in $x_v^w$ as the innermost,
localizing $(x_1^0,\ldots, x_n^0)$ to the cube $[0,1]^{k+n}$,   estimating all characteristic functions $\mathbbm{1}_A$ which do not depend on the variable  $x_v^w$ by $1$ and using the triangle inequality gives
 \begin{equation}\label{tobd}
     |\Theta^{(v)}_{L,a,b,\alpha}(A)| \leq \sum_{w =1}^{l_{v}} \sum_{m=1}^{k_{v}+1} \widetilde{\Theta}^{(v,w,m)}_{L,|\partial_m g|,a,b,\widetilde{\alpha}}(A),
 \end{equation}
 where $\widetilde{\alpha}_{v}^0  = \widetilde{\alpha}_{v}^w = 2^{-\frac{1}{2}}\alpha_{v}^w$ and $\widetilde{\alpha}$ agrees with $\alpha$ in all other coordinates. 
 Thus, it suffices to bound 
 \begin{equation}
     \label{thetatitde2}
     \sum_{m=1}^{k_v+1}\widetilde{\Theta}^{(v,w,m)}_{L,\varphi,a,b,\alpha}(A)
 \end{equation}
 for each fixed $w$ and any $L,\varphi,a,b,\alpha$ as above.

We  apply the Cauchy-Schwarz inequality in all variables but $x_{v}^{w}$ and   the summation in $m$, which estimates  \eqref{thetatitde2} by the geometric mean of 
\begin{equation}   \sum_{m=1}^{k_{v}+1}  \int_a^b \int_{\mathbb{R}^{(K+1)\cdot (L+1)}}   \Big |  \int_{\R^{k_{v}+1}}    \Big( \prod_{h\in \H_L: h(v)=w}  \mathbbm{1}_A (\Pi_h x)\Big)   (\partial_m g)_{s\alpha_{v}^w} (x_{v}^{w}-q) dx_{v}^{w} \Big |^2  \,  d{\mu}^{(v,w)}_{L,\varphi,\alpha}\, \frac{ds}{s}    \label{cs11} \end{equation}
and
\begin{equation} \sum_{m=1}^{k_{v}+1}  \int_a^b \int_{\R^{(K+1)\cdot (L+1)}}
\mathbbm{1}_{[0,1]^{k+n}}(x_1^0,\ldots, x_n^0)\,  d{\mu}^{(v,w)}_{L,\varphi,\alpha} \, \frac{ds}{s}.  \label{cs21} \end{equation}
Let us focus on the second term \eqref{cs21}, which we estimate trivially.  
 Expanding out the measure,   we first integrate in $q$.  Then for each $1\leq i\leq n$, we integrate   in all $x_i^r$ for $r\neq 0$ that appear in $\mu$ and finally in $x_1^0,\ldots,x_n^0$. 
 In the end we integrate in $s$ and sum in $m$,  which yields a bound for  \eqref{cs21} by an absolute constant times 
 \begin{equation}
 \label{trvbd}
 \log(b/a). 
 \end{equation}

It remains to estimate   \eqref{cs11}.
We integrate in $x_{v}^{r}$ for $r \neq w$ and finally in $x_{v}^0$.  Expanding out also the square of the integrand we see that \eqref{cs11}  is up to a constant  equal to 
\begin{equation*}   \sum_{m=1}^{k_{v}+1}   \int_a^b \int_{\mathbb{R}^{(K+1)\cdot (L+1) -l_v(k_v+1)}}   \Big (  \int_{\R^{k_{v}+1}}    \Big( \prod_{h\in \H_L: h(v)=0}  \mathbbm{1}_A (\Pi_h x)\Big)   (\partial_m g)_{s\alpha_{v}^w} (x_{v}^{0}-q) \, dx_{v}^{0} \Big ) 
 \end{equation*}
 \[ \times    \Big (  \int_{\R^{k_{v}+1}}    \Big( \prod_{h\in \H_L: h(v)=1}  \mathbbm{1}_A (\Pi_h x)\Big)   (\partial_m g)_{s\alpha_{v}^w} (x_{v}^{1}-q) \, dx_{v}^{1} \Big )   \]
 \[\times \Big( \prod_{i=1, i\neq v}^n g_{s\alpha_{i}} ((x_i^r-x_i^0)_{r=1}^{l_i})\,  dx_i
 \Big ) \,  dq \,  \frac{ds}{s} .  \]
  Here we have also relabelled $x_{v}^{w}$ to $x_v^0$ and denoted by $x_v^1$  its copy after the square expansion.

 We can rewrite
 \[ \Big( \prod_{h\in \H_L: h(v)=0}  \mathbbm{1}_A (\Pi_h x)\Big) \Big( \prod_{h\in \H_L: h(v)=1}  \mathbbm{1}_A (\Pi_h x)\Big) = \prod_{h\in \H_{\widetilde{L}}}  \mathbbm{1}_A (\Pi_h x),\]
 where $\widetilde{L}$ agrees with $L$ in all coordinates except possibly at $v$-th, where  $\widetilde{l}_{v} =1$. 
 Using \eqref{khhconv} again we recognize this as a positive constant multiple of
 \begin{equation}\label{theta-2}
     \Theta^{(z)}_{\widetilde{L},a,b,\widetilde{\alpha}}  
 \end{equation}
specified to $z=v$,  where $\widetilde{\alpha}$ agrees with $\alpha$ in all slots except possibly at $v$-th, where  $\widetilde{\alpha}_{v} = (\alpha_{v}^w,\alpha_{v}^w)$.

We distinguish two cases. 
If $v> \tau$, then  by symmetry we may assume $v=\tau+1$.    
The induction hypothesis, i.e. the statement $P(\tau+1)$, gives
\[ \Theta^{(v)}_{\widetilde{L},a,b,\widetilde{\alpha}}
\lesssim (\log(b/a))^{1-2^{-n+\tau+1}}.\]
Taking the geometric mean of this bound and   \eqref{trvbd}  yields the desired estimate for \eqref{tobd}. 

It remains to consider the case $v\leq \tau$, which will also prove the base case. 
If $v\leq \tau$, then   $l_{v}=1$.
First we observe that for any $1\leq z \leq n$ and any $L,a,b,\alpha$ as in \eqref{theta}, 
the form  $\Theta^{(z)}_{{L},a,b,{\alpha}}$ is non-negative if  $l_z=1$ and $\alpha_{z}^0=\alpha_{z}^1$. Indeed,  using \eqref{khhconv} we can write it up to a positive constant   as
\[  \sum_{m=1}^{k_{z}+1}  \int_a^b \int_{\mathbb{R}^{(K+1)\cdot (L+1)}}   \Big ( \int_{\R^{k_{z}+1}}    \Big( \prod_{h\in \H_{L}: h(z)=0}  \mathbbm{1}_A (\Pi_h x)\Big)   (\partial_m g)_{s\alpha_{z}^1} (x_{r}^{0}-q)\,  dx_{r}^{0} \Big )^2    \] 
\[ \times \Big(  \prod_{i=1, i\neq z}^n g_{s\alpha_{i}} ((x_i^r-x_i^0)_{r=1}^{l_i}) \, 
dx_i
\Big)\,  dq \, \frac{ds}{s}. \]
Furthermore, by  the fundamental theorem of calculus in $s$,  the Leibniz rule,  the heat equation \eqref{heateqn},   and the  identity  \eqref{laplaciansplit},  we have for any $L,a,b,\alpha$ as above  
\begin{equation}
    \label{telescoping}
     \sum_{z=1}^n  \Theta^{(z)}_{{L},a,b,{\alpha}}=2\pi \big( \Xi_{L,a,\alpha} - \Xi_{L,b,\alpha} \big), 
\end{equation}
where for any $L,a,\alpha$ as above we have denoted
\[\Xi_{L,a,\alpha}(A) :=   \int_{\mathbb{R}^{(K+1)\cdot (L+1)}}   \Big( \prod_{h\in \H_L} \mathbbm{1}_A(\Pi_hx) \Big)   \prod_{i=1}^n g_{a\alpha_{i}} ((x_i^r-x_i^0)_{r=1}^{l_i} )\, dx. \]
We also have the estimate
\begin{equation}
    \label{xibound}
    \Xi_{L,a,\alpha}(A) \lesssim 1, 
\end{equation}
  can be seen by localizing  $(x_1^0,\ldots,x_n^0)$ to the cube $[0,1]^{k+n}$, bounding  all functions $\mathbbm{1}_A$ by $1$,   integrating first in $x_i^r$ for all $r\neq 0$ and finally in $x_i^0$.

Recall that to finish the proof of the lemma it remains to estimate \eqref{theta-2} for $z=v$ 
where  $v\leq \tau$.  By non-negativity of \eqref{theta-2} for each $1\leq z \leq \tau$, 
 it then suffices to estimate  
\begin{equation*}
\sum_{z=1}^{\tau}\Theta^{(z)}_{\widetilde{L},a,b,\widetilde{\alpha}}. 
\end{equation*}
In order to bound this sum for $1\leq \tau \leq n$, by     \eqref{telescoping} and \eqref{xibound} 
   applied to  $\widetilde{L},a,b,\widetilde{\alpha}$  it    suffices to estimate  the form  \eqref{theta-2} for each $z >  \tau$.  However this  has already been achieved.  
\end{proof}

\section{The uniform part: Proof of lemma \ref{unilemma}}
In this section we proceed similarly as in \cite{LM18} and \cite{Kov20}, with the necessary modifications arising from the multilinear setting. 

We again use the fundamental theorem of calculus, the Leibniz rule and the heat equation \eqref{heateqn}, which yields
\begin{equation}
\label{diffexpr}
\mathcal{N}^{\varepsilon}_{\lambda}(A) - \mathcal{N}^{\vartheta}_{\lambda}(A)  = \frac{1}{2\pi}\sum_{v=1}^n N(v, \lambda,\vartheta, \varepsilon,A),
\end{equation}
where $N(v, \lambda,\vartheta, \varepsilon,A)$ has been defined in \eqref{Nform}. 
We write out each  $N(v, \lambda,\vartheta, \varepsilon,A)$ as in \eqref{Nform-2}. By symmetry it again suffices to 
consider the case $v=1$ and the summand for $w=k_1$. 

In the summand for $w=k_1$ in  $N(1, \lambda,\vartheta, \varepsilon,A)$      we consider the integrand in $p_1^1,\ldots, p_1^{k_1-1}$  and $x_1=(x_1^0,\ldots,x_1^{k_1}),$  i.e.
 \begin{equation}   \label{unif-integrand} \int_{\R^{2k_1(k_1+1)}} \Big( \prod_{h \in \H} \mathbbm{1}_{A}(\Pi_h x) \Big )    
 \sigma_{\lambda}^{\Delta_{1},p_{1}^1,\ldots, p_{1}^{k_1-1}}*(\Delta g)_{t\lambda} (x_{1}^{k_1} - x_{1}^0)     \, g_{t\lambda} \big ((x_{1}^r - x_{1}^0 - p_{1}^r)_{r=1}^{k_1-1} \big ) \,  d\nu\, 
dx_1 
 \end{equation}
where 
\[ d\nu =  d\sigma_{\lambda}^{\Delta_{1},p_{1}^{1},\ldots p_{1}^{k_{1}-2}} (p_{1}^{k_{1}-1})\cdots d\sigma_{\lambda}^{\Delta_{1}}(p_{1}^{1})  . \]
Integrating in $x_1^{k_1}$ we  rewrite it as 
\[ \int_{\R^{(2k_1-1)(k_1+1)}} \Big( \prod_{h \in \H : h(1) \neq k_1} \mathbbm{1}_{A}(\Pi_h x) \Big )  \mathcal{F} \ast {\sigma}_{\lambda}^{\Delta_1,p_1^1,\ldots,p_1^{k_1-1}} \ast (\Delta g)_{t\lambda} (x_1^0)  \]
\begin{equation}
    \label{unfptform}
    \times   g_{t\lambda}  \big ((x_{1}^r - x_{1}^0 - p_{1}^r)_{r=1}^{k_1-1} \big ) \,  d\nu\,  dx_1^0 \cdots dx_1^{k_1-1}, 
\end{equation}
where $\mathcal{F}$ denotes the function
\[ x_1^{k_1} \mapsto \prod_{h \in \H : h(1) = k_1} \mathbbm{1}_{A}(\Pi_h x). \]
By Jensen's inequality  applied to the probabilistic measure 
\[    g_{t\lambda}  \big ((x_{1}^r - x_{1}^0 - p_{1}^r)_{r=1}^{k_1-1} \big ) \mathbbm{1}_{[0,1]^{k_1+1}}(x_1^0)\, d\nu  \,  dx_1^0 \cdots dx_1^{k_1-1}  \]
for fixed $p_1^r$, 
the square of \eqref{unfptform} is bounded by  
\[  \int_{\R^{(2k_1-1)(k_1+1)}} \Big( \prod_{h \in \H : h(1) \neq k_1} \mathbbm{1}_{A}(\Pi_h x) \Big )  |\mathcal{F} \ast {\sigma}_{\lambda}^{\Delta_1,p_1^1,\ldots,p_1^{k_1-1}}  \ast (\Delta g)_{t\lambda} (x_1^0)|^2  \]
\[\times  g_{t\lambda}  \big ((x_{1}^r - x_{1}^0 - p_{1}^r)_{r=1}^{k_1-1} \big ) \, d\mu \, dx_1^0 \cdots dx_1^{k_1-1} . \]
Bounding all characteristic functions that do not make up $\mathcal{F}$ by one and integrating in $x_1^1,\ldots, x_1^{k_1-1}$,  we estimate the   last display by
\[ \int_{\R^{k_1(k_1+1)}}  |\mathcal{F} \ast {\sigma}_{\lambda}^{\Delta_1,p_1^1,\ldots,p_1^{k_1-1}} \ast (\Delta g)_{t\lambda} (x_1^0)|^2  \, dx_1^0  \,d\nu.   \]
By Plancherel's theorem in $x_1^0$, this equals 
\begin{equation}
\label{estFour} 
\int_{\R^{k_1(k_1+1)}}  |\widehat{\mathcal{F}}(\xi)|^2  |\widehat{{\sigma}}_{\lambda}^{\Delta_1,p_1^1,\ldots,p_1^{k_1-1}}(\xi)|^2 |\widehat{(\Delta g)_{t\lambda}}(\xi)|^2  \,  d\xi \,d\nu .
\end{equation}
 We will use the following estimate for the spherical measure that can be found in \cite{Bou86} and \cite{LM18}:  
\[ |\widehat{{\sigma}}^{\Delta_1,p_1^1,\ldots,p_1^{k_1-1}}(\xi) | \lesssim     \textnormal{dist}(\xi, \textnormal{span} \{p_1^1, \ldots, p_1^{k_1-1}\} )^{-\frac{1}{2}} \]
for each $\xi\in \R^{k_1+1}$.
Together with 
\[ \widehat{\Delta g}(\xi) = -4\pi^2 | \xi |^2 e^{-\pi | \xi |^2}, \]
this allows us to estimate \eqref{estFour} by 
\begin{equation}
\label{rotinv}   \int_{\R^{k_1(k_1+1)}} \  |\widehat{\mathcal{F}}(\xi)|^2  \textnormal{dist}(\lambda\xi, \textnormal{span} \{p_1^1, \ldots, p_1^{k_1-1}\})^{-1} | t\lambda\xi |^4 e^{-2\pi | t\lambda\xi |^2}\, d\xi\,   d\nu.  \end{equation}
Next, we have  
\[\int_{\R^{(k_1+1)(k_1-1)}} \textnormal{dist}(\lambda\xi, \textnormal{span} \{p_1^1, \ldots, p_1^{k_1-1}\})^{-1} \, d\nu \lesssim |\lambda\xi|^{-1}\]
This  estimate is   proven in Section 3.3 of \cite{Kov20}. Applying this to \eqref{rotinv} yields a bound by
\[\int_{\R^{k_1+1}} |\widehat{\mathcal{F}}(\xi)|^2  |\lambda \xi|^{-1} |t\lambda \xi|^4 e^{-2\pi |t\lambda \xi|^2} \, d\xi  \lesssim t \int_{\R^{k_1+1}} |\widehat{\mathcal{F}}(\xi)|^2 \, d\xi  \leq t,\]
For the second inequality we used that
$ y\mapsto  y^3 e^{-2\pi y^2}$ is uniformly bounded, while   for the last 
 inequality we applied Plancherel one more time. 
This gives an estimate for  \eqref{unif-integrand}   by  $t^{\frac{1}{2}}.$  Finally,   applying this estimate first to \eqref{unif-integrand} and then integrating in \eqref{diffexpr} in the remaining variables,  we  see that
$$|\mathcal{N}^{\vartheta}_{\lambda}(A)-\mathcal{N}^{\varepsilon}_{\lambda}(A)| \lesssim \int_{\vartheta}^{\varepsilon} \int_{\R^{k+n-k_1-1}} t^{\frac{1}{2}} \mathbbm{1}_{[0,1]^{k+n-k_1-1}}(x_{2}^0,\ldots,x_n^0)\,  dx_{2}^0 \cdots dx_n^0  \frac{dt}{t}\lesssim \varepsilon^{\frac{1}{2}}.$$
Letting $\vartheta \to 0$ yields   \eqref{lemma3}.\\
 
\subsection*{\bf Acknowledgments.}
M. Stip\v{c}i\'{c} was supported in part by a fellowship through the Grand Challenges Initiative at Chapman University and the Croatian Science Foundation under the project UIP-2017-05-4129 (MUNHANAP).
The authors thank Vjekoslav Kova\v{c} for valuable discussions and comments.


\end{document}